\newcommand\F{{\mathbb F}}
\newcommand\Q{{\mathbb Q}}
\newcommand\Z{{\mathbb Z}}
\newtheorem{theorem}{Theorem}[section]
\newtheorem{lemma}[theorem]{Lemma}
\newtheorem{proposition}[theorem]{Proposition}
\theoremstyle{definition}
\newtheorem{definition}[theorem]{Definition}
\theoremstyle{remark}
\numberwithin{equation}{section}
\begin{document}

\title[Non-ideal cyclotomic families]{On the non-idealness of cyclotomic families of pairing-friendly elliptic curves}

\author{Min Sha}
\address{Institut de Mathematiques de Bordeaux, Universite Bordeaux 1
, 33405 Talence Cedex, France}
\email{shamin2010@gmail.com}
\thanks{The author was supported by the China Scholarship Council.}


\subjclass[2010]{Primary 11T71, 14H52; Secondary 11T22}



\keywords{Pairing-friendly elliptic curve, cyclotomic family, ideal family, cyclotomic polynomial}

\begin{abstract}
Let $k=2^mp^n$ for an odd prime $p$ and integers
$m\ge 0$ and $n\ge 0$. We obtain lower bounds for the $\rho$-values of cyclotomic families of
pairing-friendly elliptic curves with embedding degree $k$ and $r(x)=\Phi_k(x)$.
Our bounds imply that none of these families are ideal.
\end{abstract}

\maketitle



\section{Introduction}
In recent years, the Tate pairing and the Weil pairing on elliptic curves over finite fields have been used to construct
 many novel cryptographic systems for which no other practical implementation is known; see \cite{Boneh2003,Boneh2004,Joux2000,Sakai2000} for the pioneering work and see \cite {Paterson2005} for a survey. The elliptic curves suitable for implementing pairing-based cryptographic systems are called \emph{pairing-friendly elliptic curves}.

More precisely, a pairing-friendly elliptic curve $E$ over a finite field $\F_q$
contains a subgroup of large prime-order $r$ such that for some $k$, $r|q^{k}-1$ and $r\nmid q^{i}-1$ for $0<i<k$, and
the parameters $q,r$ and $k$ should be chosen such that the discrete logarithm problem is infeasible both in an order-$r$ subgroup of $E(\F_q)$ and in $\F_{q^{k}}^{*}$, and the arithmetic in $\F_{q^{k}}$ is feasible. Here, $k$ is called the \emph{embedding degree} of $E$ with respect to $r$, and the ratio $\frac{\log q}{\log r}$ is called the \emph{$\rho$-value} of $E$ with respect to $r$.

Roughly speaking, pairing-friendly elliptic curves should have small embedding degree with respect to a large
prime-order subgroup. But Balasubramanian and Koblitz \cite{Koblitz1998} showed that, in general, the embedding degree $k$ can be expected to be around $r$. This makes pairing-friendly elliptic curves rare; for example see \cite{Shparlinski2012}. Thus, specific constructions are needed; see \cite{Freeman2010} for an exhaustive survey .

The main known strategy to construct pairing-friendly elliptic curves is as follows. Fix $k\ge 1$ and square-free $D\ge 1$, and look for an integer $t$ and two primes $r$ and $q$ satisfying
\begin{equation} \label{condition}
r|q+1-t,\quad r|\Phi_{k}(q),\quad 4q=t^{2}+Dy^{2} \textrm{ for some $y$ (\textit{CM equation})},
\end{equation}
where $\Phi_{k}(x)$ is the $k$-th cyclotomic polynomial, and $D$ is the so-called \emph{CM discriminant}. Then, the \emph{CM method} (see \cite[Section 18.1]{Cohen2005}) can produce an elliptic curve $E$ over $\F_q$ with $|E(\F_q)|=q+1-t$.

A well-known construction for the so-called \emph{complete families} of pairing-friendly elliptic curves with $k$ and $D$ fixed is due to \cite{Barreto2002,Brezing2005,Miyaji2001,Scott2006}.
Briefly speaking, the idea is to parameterize $t,r,  q,y$ as polynomials and then choose $ t(x),r(x), q(x)$, and $y(x)$ satisfying (\ref{condition})
 and such that $r(x)$ is irreducible and $q(x)$ is a power of an irreducible polynomial $p(x)$. If moreover $r(x)$ and $p(x)$ satisfy some extra conditions which conjecturably guarantee $r(x)$ and $p(x)$ to take infinitely many prime values simultaneously, then we say that the triple $( t(x),r(x), q(x))$ parameterizes a \emph{complete family} of elliptic curves. For such a family, the $\rho$-value, denoted by $\rho(t,r,q)$, is
 $$
 \rho(t,r,q)=\frac{\deg q(x)}{\deg r(x)}.
 $$
 When furthermore $r(x)$ is chosen to be $\Phi_{n}(x)$ with $k|n$, this yields the most popular family called \emph{cyclotomic family}; see Section \ref{family} for more details.

For practical considerations, neither $k$ nor $\deg r(x)$ should be large.
  Following \cite{Freeman2010}, a \emph{practical complete family} $( t(x),r(x), q(x))$ means that $k\le 50$ and $\deg r(x)\le 40$.

 In general, curves with small $\rho$-values are desirable in order to speed up arithmetic on the elliptic curves. The ideal case is $\rho=1$. We call a complete family $( t(x),r(x), q(x))$ of elliptic curves an \emph{ideal family} if $\rho(t,r,q)=1$.

Okano \cite{Okano2012} showed that for a cyclotomic family $( t(x),r(x), q(x))$ with embedding degree $k$ and $r(x)=\Phi_{k}(x)$, where $k=p$ or $2p$ for some odd prime (in this case, $p$ should be equal to 3 modulo 4), one has  $\rho(t,r,q)\ne 1$. By using some methods different from those in \cite{Okano2012} and based on the properties of cyclotomic polynomials, we can extend this result to more cases. In fact, we get much stronger results, that is, we obtain lower bounds or smallest possible values of $\rho(t,r,q)$.

\begin{theorem}\label{Thm1}
Let $k=2^{m}p^{n}$ for some odd prime $p$, and integers, $m\ge 0$ and $n\ge 0$, and $D$ a square-free positive integer. Suppose that $(t(x),r(x),q(x))$ is a cyclotomic family of elliptic curves with $r(x)=\Phi_{k}(x)$,
embedding degree $k$, and CM discriminant $D$. For the $\rho$-value $\rho(t,r,q)$, we list all the cases as follows.
\begin{enumerate}[$(1)$]
\item If $k=2^m$ (in this case we must have $m\ge 3$), we have $D=1$ or $D=2$. Then, the following hold. \label{2^m0}
   \begin{enumerate}
    \item[$(a)$] If $D=1$, then the smallest possible value of $\rho(t,r,q)$ is $1+\frac{1}{2^{m-2}}$.
    \item[$(b)$] If $D=2$, we have $\rho(t,r,q)\ge 1.5$.
   \end{enumerate}

\item If $k=p^n$ or $2p^n, n\ge 1$, we must have $p\equiv 3 \pmod 4$ and $D=p$. Then, the following hold. \label{p^n0}
   \begin{enumerate}
    \item[$(a)$] If $p=3$ (in this case we must have $n\ge 2$), then the smallest possible value of $\rho(t,r,q)$ is $1+\frac{1}{3^{n-1}}$.
    \item[$(b)$] If $p\ge 7$ , we have $\rho(t,r,q)> 1.1$.
   \end{enumerate}

\item If $k=2^mp^n, m\ge 2, n\ge 1$, we have $D=1,2,p$ or $2p$. Then, the following hold. \label{2^mp^n0}
   \begin{enumerate}
    \item[$(a)$] If $D=1$, then the smallest possible value of $\rho(t,r,q)$ is $\frac{p}{p-1}$.
    \item[$(b)$] If $D=2$ (in this case we must have $m\ge 3$), then $\rho(t,r,q)> 1.5$ for $p>3$, and $\rho(t,r,q)\ge 1.25$ for $p=3$.
    \item[$(c)$] If $D=p$ and $p\equiv 1 \pmod 4$, we have $\rho(t,r,q)> 1.4$ for $p>5$, and $\rho(t,r,q)\ge  1.75$ for $p=5$. If $D=p$, $p\equiv 3 \pmod 4$ and $p\ge 7$, we have $\rho(t,r,q)> 1.1$. If $D=p$ and $p=3$, then the smallest possible value of $\rho(t,r,q)$ is $1+\frac{1}{2^{m-1}3^{n-1}}$.
    \item[$(d)$] If $D=2p$ (in this case we must have $m\ge 3$), we have $\rho(t,r,q)> 1.5$.
   \end{enumerate}
\end{enumerate}
\end{theorem}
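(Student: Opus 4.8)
The plan is to translate the whole statement into a computation of degrees of representatives in the cyclotomic field $\Q[x]/(r(x))\cong\Q(\zeta_k)$, where $\zeta_k$ denotes the image of $x$. Since $r(x)=\Phi_k(x)$, the divisibility $r(x)\mid\Phi_k(q(x))$ forces $q(\zeta_k)$ to be a primitive $k$-th root of unity, say $q(\zeta_k)=\zeta_k^{j}$ with $\gcd(j,k)=1$; then $r(x)\mid q(x)+1-t(x)$ gives $t(\zeta_k)=\zeta_k^{j}+1$, and substituting $t\equiv q+1$ into the CM equation $4q(x)=t(x)^{2}+Dy(x)^{2}$ and reducing modulo $r(x)$ gives $D\,y(\zeta_k)^{2}=-(\zeta_k^{j}-1)^{2}$, whence $y(\zeta_k)=\pm(\zeta_k^{j}-1)/\sqrt{-D}$ and in particular $\sqrt{-D}\in\Q(\zeta_k)$. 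This last constraint --- that $\Q(\sqrt{-D})$ is an imaginary quadratic subfield of $\Q(\zeta_k)$ --- is exactly what pins down the admissible $D$: enumerating the quadratic subfields of $\Q(\zeta_{2^m})$, of $\Q(\zeta_{p^n})$, and of $\Q(\zeta_{2^mp^n})=\Q(\zeta_{2^m})\cdot\Q(\zeta_{p^n})$ produces the lists of $D$ in parts (\ref{2^m0}), (\ref{p^n0}) and (\ref{2^mp^n0}), and at the same time forces the side conditions $p\equiv3\pmod4$ (needed exactly when $\sqrt{-p}\in\Q(\zeta_{p^n})$ is required) and $m\ge3$ (needed exactly when $\sqrt{2}$, hence $\sqrt{-2}$ or $\sqrt{-2p}$, is required).

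Because $D>0$, the leading terms of $t(x)^{2}$ and $Dy(x)^{2}$ cannot cancel, so $\deg q=2\max(\deg t,\deg y)$ and therefore $\rho(t,r,q)=2\max(\deg t,\deg y)/\varphi(k)$. For a fixed $j$, the degrees $\deg t$ and $\deg y$ are at least the degrees $\delta_t(j)$ and $\delta_y(j)$ of the unique representatives of $\zeta_k^{j}+1$ and of $(\zeta_k^{j}-1)/\sqrt{-D}$ of degree $<\varphi(k)$, since adding multiples of $\Phi_k(x)$ only raises the degree; hence
\[
\rho(t,r,q)\ \ge\ \frac{2}{\varphi(k)}\,\min_{\gcd(j,k)=1}\,\max\bigl(\delta_t(j),\,\delta_y(j)\bigr),
\]
and for the ``smallest possible value'' assertions one checks in addition that the polynomials realizing the minimum constitute an admissible complete family, so that the inequality is an equality. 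Computing $\delta_t(j)$ is routine from the explicit shapes $\Phi_{2^m}(x)=x^{2^{m-1}}+1$, $\Phi_{p^n}(x)=1+x^{p^{n-1}}+\cdots+x^{(p-1)p^{n-1}}$ and $\Phi_{2^mp^n}(x)=\Phi_p\bigl(-x^{2^{m-1}p^{n-1}}\bigr)$. The substantive input is $\delta_y(j)$, for which one needs an explicit representative of $\sqrt{-D}$: the monomial $\zeta_k^{k/4}$ when $D=1$; the element $1+2\zeta_3$ (embedded via $\zeta_3=\zeta_k^{k/3}$) when $D=p=3$; and, in the remaining cases $D\in\{2,2p\}$ or $D=p\ge5$, a representative built from $\zeta_8\pm\zeta_8^{-1}$ and/or the quadratic Gauss sum $\sqrt{p^{*}}=\sum_{a}\bigl(\tfrac{a}{p}\bigr)\zeta_p^{a}$. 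Multiplying $\zeta_k^{j}-1$ by (a rational multiple of) such a representative, reducing modulo $\Phi_k(x)$, and then minimizing over the units $j$ yields the stated results: a closed-form value in the ``monomial'' cases $D=1$ and $D=3$ (for instance $1+\tfrac1{2^{m-2}}$ when $D=1$ and $k=2^m$), where one identifies the optimal $j$ and verifies optimality by counting nonzero power-basis coordinates; and only one-sided inequalities when the representative of $\sqrt{-D}$ has many nonzero coordinates, so that the degree of the reduced product stays above a threshold for every $j$.

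The core of the proof, and the step I expect to be the main obstacle, is carrying out the degree analysis when $k$ is composite, i.e.\ part (\ref{2^mp^n0}). For $k=2^m$ (and for $k=p^n$ or $2p^n$) reduction modulo $\Phi_k(x)$ is governed by one monomial relation --- $\zeta_k^{2^{m-1}}=-1$, respectively $1+\zeta_k^{p^{n-1}}+\cdots+\zeta_k^{(p-1)p^{n-1}}=0$ --- so the power-basis supports of $\zeta_k^{j}+1$ and of $(\zeta_k^{j}-1)/\sqrt{-D}$ are easy to describe, the optimal $j$ is small (often $j=1$), and a short case check shows that for every other unit $j$ at least one of $\delta_t(j),\delta_y(j)$ rises to the claimed value. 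When $k=2^mp^n$, the polynomial $\Phi_{2^mp^n}(x)=\Phi_p(-x^{2^{m-1}p^{n-1}})$ intertwines the $2$-part and the $p$-part, the reduction of large powers of $\zeta_k$ is no longer a single substitution, and one must instead track the power-basis support of the products case by case, running over the residue classes of $j$, over the sub-cases $D\in\{1,2,p,2p\}$, and over the class of $p$ modulo $4$. Proving that no unit $j$ beats the claimed $\rho$-value --- and, in the exact cases, exhibiting an admissible family that attains it --- is where essentially all of the remaining work lies; the ``$>1.5$'', ``$>1.4$'', ``$>1.1$'' estimates then follow from a lower bound, uniform in $j$, for the degree of the reduced representative of $(\zeta_k^{j}-1)/\sqrt{-D}$.
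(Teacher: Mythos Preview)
Your approach is correct and essentially identical to the paper's: the paper also reduces to computing $\deg t$ and $\deg y$ for the canonical representatives of $\zeta_k^{g}+1$ and $(\zeta_k^{g}-1)\sqrt{-D}$ modulo $\Phi_k(x)$, constrains $D$ via the conductor of $\Q(\sqrt{-D})$, and then carries out the case analysis using the monomial, $\zeta_8$-based, and Gauss-sum representatives of $\sqrt{-D}$ that you describe. The work you flag as the main obstacle is indeed where the paper spends most of its effort, via an explicit reduction lemma for $x^i\bmod\Phi_{2^mp^n}(x)$ together with ad hoc quadratic-residue tricks (e.g.\ locating a square $b^2$ with $11p/20<b^2<p-1$ for the $>1.1$ bound, and a counting argument for residues among $\{2a+1\}$ for the $>1.4$ bound).
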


From Theorem \ref{Thm1}, we can see that for these cyclotomic families, small $\rho$-values can possibly occur only when $k=2^m, D=1$; or $k=3^n, 2\cdot 3^n, D=3$; or $k=2^mp^n, D=1$; or $k=2^m3^n, D=3$. This suggests that small $\rho$-values may occur rare for cyclotomic families with large CM discriminant $D$.

 In Theorem \ref{Thm1},
we say ``\emph{smallest possible value}'', because on the one hand the claimed $\rho$-value is indeed a lower bound for the $\rho$-values of such cyclotomic families, on the other hand in Section \ref{Proof1} we indeed get a polynomial triple $(t(x),r(x),q(x))$ with the claimed $\rho$-value following Theorem \ref{Brezing-Weng}, but one needs to check whether it is a complete family. Although it is not easy to get a general result,
with the help of the computer algebra system PARI/GP \cite{Pari},
for $k\le 82$ we get Table \ref{data} (see Section \ref{Proof2}),
which is compatible with Theorem \ref{Thm1}.

The computations for Table \ref{data} suggest that some smallest possible $\rho$-values can be achieved,
 for example Theorem \ref{Thm1} ($2a$) when $k=3^n$ and ($3c$) when $k=3\cdot 2^m, m\ge 2$; but some cannot be achieved, for example Theorem \ref{Thm1} ($1a$) and ($3a$). The reader can also check this in Table \ref{data}.

Comparing Table \ref{data} with \cite[Table 5]{Freeman2010},
the cyclotomic families in Theorem \ref{Thm1} ($2a$) when $k=9$ or $k=27$ may be comparable,
since they have the same $\rho$-values as claimed in \cite[Table 5]{Freeman2010}
and they have a simpler form of $t(x)$. We list them explicitly as follows.

$k=9,D=3:$
\begin{equation}
\left\{ \begin{array}{ll}
                t(x)=x+1  ,\\
                r(x)=x^6+x^3+1  ,\\
                q(x)=\frac{1}{3}(x-1)^2(x^6+x^3+1)+x.
                 \end{array} \right.
\notag
\end{equation}

$k=27,D=3:$
\begin{equation}
\left\{ \begin{array}{ll}
                t(x)=x+1  ,\\
                r(x)=x^{18}+x^9+1  ,\\
                q(x)=\frac{1}{3}(x-1)^2(x^{18}+x^9+1)+x.
                 \end{array} \right.
\notag
\end{equation}

The following theorem is a direct corollary of Theorem \ref{Thm1}, which says that all the cyclotomic families in Theorem \ref{Thm1} are not ideal.

\begin{theorem}\label{Thm2}
Let $k=2^{m}p^{n}$ for some odd prime $p$, and integers, $m\ge 0$ and $n\ge 0$, and $D$ a square-free positive integer. Suppose that $(t(x),r(x),q(x))$ is a cyclotomic family of elliptic curves with $r(x)=\Phi_{k}(x)$,
embedding degree $k$, and CM discriminant $D$.
Then, we have
$$
\rho(t,r,q)\ne 1.
$$
\end{theorem}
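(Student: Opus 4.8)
The plan is to derive Theorem \ref{Thm2} directly from Theorem \ref{Thm1}. The key observation is that \emph{every} lower bound, and every ``smallest possible value'', for $\rho(t,r,q)$ listed in Theorem \ref{Thm1} is strictly larger than $1$; consequently $\rho(t,r,q)\ne 1$ in every case, which is exactly the assertion of Theorem \ref{Thm2}. Thus the proof amounts to running through the case list of Theorem \ref{Thm1} and verifying this one inequality in each subcase.

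Carrying this out: in case (1) we have $\rho(t,r,q)\ge 1+\tfrac{1}{2^{m-2}}>1$ when $D=1$ (and since $m\ge 3$ in fact $\rho\ge\tfrac32$), and $\rho(t,r,q)\ge 1.5$ when $D=2$. In case (2) we have $\rho(t,r,q)\ge 1+\tfrac{1}{3^{n-1}}>1$ when $p=3$, and $\rho(t,r,q)>1.1$ when $p\ge 7$. In case (3) we have $\rho(t,r,q)\ge\tfrac{p}{p-1}=1+\tfrac{1}{p-1}>1$ when $D=1$; $\rho(t,r,q)>1.5$ or $\rho(t,r,q)\ge 1.25$ when $D=2$; $\rho(t,r,q)>1.4$, or $\rho(t,r,q)\ge 1.75$, or $\rho(t,r,q)>1.1$, or $\rho(t,r,q)\ge 1+\tfrac{1}{2^{m-1}3^{n-1}}>1$ when $D=p$; and $\rho(t,r,q)>1.5$ when $D=2p$. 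In all subcases the displayed quantity exceeds $1$, so $\rho(t,r,q)\ne 1$.

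There is essentially no obstacle left in Theorem \ref{Thm2} itself; the substance is entirely in Theorem \ref{Thm1}. Two small points should nonetheless be confirmed. First, since $k=2^{m}p^{n}$, cases (1)--(3) of Theorem \ref{Thm1} together with its parenthetical constraints (``$m\ge 3$'', ``$n\ge 2$ when $p=3$'', ``$p\equiv 3\pmod 4$'' in case (2), ``$m\ge 2,\ n\ge 1$'' in case (3), and the lists of admissible $D$) account for every pair $(m,n)$: the exponent pairs not appearing there---for instance $k=1,2,4$---are exactly the ones Theorem \ref{Thm1} rules out, so no case is missing. Second, several of the bounds, namely $1+\tfrac{1}{2^{m-2}}$, $1+\tfrac{1}{3^{n-1}}$, $\tfrac{p}{p-1}$ and $1+\tfrac{1}{2^{m-1}3^{n-1}}$, tend to $1$ as the relevant parameter grows, so the conclusion genuinely uses that Theorem \ref{Thm1} supplies \emph{strict} lower bounds rather than merely $\rho(t,r,q)\ge 1$. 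Granting Theorem \ref{Thm1}, Theorem \ref{Thm2} is then a one-line corollary.
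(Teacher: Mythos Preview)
Your proposal is correct and follows exactly the paper's approach: the paper states Theorem \ref{Thm2} as ``a direct corollary of Theorem \ref{Thm1}'' with no separate proof, and your case-by-case verification that each bound exceeds $1$ is precisely what this entails. One small slip: your parenthetical ``since $m\ge 3$ in fact $\rho\ge\tfrac32$'' in case (1a) is false for $m>3$ (the bound $1+\tfrac{1}{2^{m-2}}$ decreases in $m$), but this is harmless since all you need is $\rho>1$.
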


 Although Table 5 in \cite{Freeman2010} suggests that there are no practical ideal cyclotomic families of pairing-friendly elliptic curves, there are few general results. Theorem \ref{Thm2} gives evidence that there is no ideal cyclotomic family of pairing-friendly elliptic curves.

 We want to indicate that the methods we use here rely heavily on simple expressions of the cyclotomic polynomials $\Phi_{2^{m}p^{n}}(x)$. For a general cyclotomic family $(t(x),r(x), q(x))$ with embedding degree $k$, $r(x)=\Phi_{kd}(x)$ and $d\ge 2$, it might be difficult to
 get a general result like Theorem \ref{Thm1} or Theorem \ref{Thm2}.

\section{Complete families of pairing-friendly elliptic curves}\label{family}

In this section, we will briefly introduce complete families of pairing-friendly elliptic curves; 
see \cite[Section 6]{Freeman2010} for more details.

A famous conjecture of Buniakowski and Schinzel \cite[Page 323]{Lang} asserts that a non-constant $f(x)\in \Z[x]$
takes an infinite number of prime values if and only if $f(x)$ is irreducible with positive leading coefficient, and $\gcd(\{f(x) : x\in\Z\}) = 1$. Furthermore, a conjecture by Bateman and Horn \cite{Bateman1962} predicts the density of such prime values. In practice, we must also consider rational polynomials.

\begin{definition}
We say that a polynomial $f(x)\in \Q[x]$ \emph{represents integers} if $f(x)\in \Z$ for some $x\in \Z$.
\end{definition}

\begin{definition}\label{reprime}
We say that $f(x)\in \Q[x]$ \emph{represents primes} if it satisfies the following conditions:
\begin{enumerate}[$(1)$]
\item $f(x)$ is non-constant and irreducible with positive leading coefficient;
\item $f(x)$ represents integers;
\item $\gcd(\{f(x)\in\Z:x\in\Z\})=1$.
\end{enumerate}
\end{definition}

So, when a rational polynomial $f(x)$ represents primes, it is likely to take infinitely many prime values. Now we are ready to define complete families of elliptic curves.
\begin{definition}
For a given positive integer $k$ and a positive square-free integer $D$, the triple
$(t(x), r(x), q(x))\in \Q[x]^{3}$ \emph{parameterizes a complete family of elliptic curves with embedding degree $k$ and CM discriminant $D$} if the following conditions are satisfied:
\begin{enumerate}[$(1)$]
\item $q(x)$ is a power of a polynomial which represents primes;
\item $r(x)$ represents primes and $t(x)$ represents integers;
\item $r(x)|q(x)+1-t(x)$ and $r(x)|\Phi_{k}(t(x)-1)$;
\item There exists some $y(x)\in\Q[x]$ representing integers such that $4q(x)=t(x)^2+Dy(x)^2$.
\end{enumerate}
\end{definition}

Barreto, Lynn and Scott \cite{Barreto2002} and (independently) Brezing and Weng \cite{Brezing2005} both observed that we can generalize the Cocks-Pinch method (see \cite[Theorem 4.1]{Freeman2010}) to produce complete families of elliptic curves. Brezing and Weng gave a construction in greatest generality. We describe it below as stated in \cite{Freeman2010} with minor modifications.
\begin{theorem}[Brezing-Weng \cite{Brezing2005}]\label{Brezing-Weng}
Fix a positive integer $k$ and a positive square-free integer $D$. Then execute the following steps.
\begin{enumerate}[$(1)$]
\item Find an irreducible polynomial $r(x)\in\Z[x]$ with positive leading coefficient such that a number field $K\cong\Q[x]/(r(x))$  contains $\sqrt{-D}$ and the $k$-th cyclotomic field.
\item Choose a primitive $k$-th root of unity $\eta_k\in K$.
\item Let $t(x)\in\Q[x]$ be a polynomial mapping to $\eta_k+1$ in K such that $\deg t(x)<\deg r(x)$.
\item Let $y(x)\in\Q[x]$ be a polynomial mapping to $(\eta_k-1)/\sqrt{-D}$ in K such that $\deg y(x)<\deg r(x)$.
\item Let $q(x)\in\Q[x]$ be given by $(t(x)^2+Dy(x)^2)/4$.
\end{enumerate}
Suppose that $q(x)$ represents primes and both $t(x)$ and $y(x)$ represent integers. Then the triple
$(t(x), r(x), q(x))$ parameterizes a complete family of elliptic curves with embedding degree $k$ and CM discriminant $D$. The $\rho$-value of this family is
$$
\rho(t,r,q)=\frac{2\max\{\deg t(x),\deg y(x)\}}{\deg r(x)}.
$$
\end{theorem}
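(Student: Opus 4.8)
The plan is to verify directly the four conditions in the definition of a complete family, carrying out all the algebra inside the number field $K\cong\Q[x]/(r(x))$. Let $\theta$ denote the image of $x$ in $K$. Since $r(x)$ is irreducible over $\Q$, it is the minimal polynomial of $\theta$, so the evaluation map $g(x)\mapsto g(\theta)$ from $\Q[x]$ onto $K$ has kernel the ideal $(r(x))$; hence, for any $g(x)\in\Q[x]$, one has $r(x)\mid g(x)$ in $\Q[x]$ if and only if $g(\theta)=0$ in $K$. Expanding in the basis $1,\theta,\dots,\theta^{\deg r-1}$ shows that the degree-$(<\deg r)$ polynomials $t(x)$ and $y(x)$ representing $\eta_k+1$ and $(\eta_k-1)/\sqrt{-D}$ in $K$ are uniquely determined, so the construction is well posed and $q(x)=(t(x)^2+Dy(x)^2)/4\in\Q[x]$.

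The one computation that matters is the value of $q$ at $\theta$. From $t(\theta)=\eta_k+1$ and $Dy(\theta)^2=D\cdot(\eta_k-1)^2/(-D)=-(\eta_k-1)^2$ one gets
$$
q(\theta)=\frac{(\eta_k+1)^2-(\eta_k-1)^2}{4}=\eta_k .
$$
Condition~(3) then follows at once: because $\eta_k$ is a primitive $k$-th root of unity, $\Phi_k(t(\theta)-1)=\Phi_k(\eta_k)=0$, whence $r(x)\mid\Phi_k(t(x)-1)$; and $q(\theta)+1-t(\theta)=\eta_k+1-(\eta_k+1)=0$, whence $r(x)\mid q(x)+1-t(x)$. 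Conditions~(1), (2), (4) are immediate from the hypotheses: $q(x)$ represents primes, hence is in particular the first power of a polynomial representing primes; $t(x)$ represents integers, and $r(x)$ represents primes (this last has to be assumed of the $r(x)$ produced in step~(1); it is automatic here, since $r(x)=\Phi_k(x)$); and $4q(x)=t(x)^2+Dy(x)^2$ with $y(x)$ representing integers holds by the very definition of $q(x)$. Hence $(t(x),r(x),q(x))$ parameterizes a complete family with embedding degree $k$ and CM discriminant $D$.

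For the $\rho$-value, recall that a complete family has $\rho(t,r,q)=\deg q(x)/\deg r(x)$, so it remains to identify $\deg q(x)$ with $2\max\{\deg t(x),\deg y(x)\}$. Here the positivity of $D$ is used: the leading coefficient of $t(x)^2$ is a square and that of $Dy(x)^2$ is $D$ times a square, so both are nonnegative and, since $t(x)$ and $y(x)$ do not both vanish once $\eta_k\neq\pm1$ (i.e. $k\ge3$), not simultaneously zero; therefore no cancellation of top-degree terms occurs in $t(x)^2+Dy(x)^2$, even when $\deg t(x)=\deg y(x)$. Thus $\deg\big(t(x)^2+Dy(x)^2\big)=2\max\{\deg t(x),\deg y(x)\}$, and dividing by $4$ does not change the degree.

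I expect no deep obstacle, since this is essentially a verification; the main subtleties are the degree bookkeeping just mentioned — where positivity of $D$ is precisely what rules out leading-term cancellation — and the care needed in passing between divisibility by $r(x)$ in $\Q[x]$ and vanishing in $K$ (which relies on $r(x)$ being irreducible), with the small cases $k\le2$, where $t(x)$ or $y(x)$ may be zero, being harmless. If one also wishes to record that the embedding degree of the specialized curves is genuinely $k$ — not part of the stated definition of a complete family, so not strictly needed — one invokes the standard fact that a prime $\ell\nmid k$ dividing $\Phi_k(a)$ forces $a$ to have multiplicative order exactly $k$ modulo $\ell$, applied with $a=q(x_0)$, $\ell=r(x_0)$.
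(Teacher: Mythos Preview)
The paper does not supply its own proof of this theorem: it is quoted from Brezing--Weng \cite{Brezing2005} (as restated in \cite{Freeman2010}) and used as a black box, so there is nothing in the paper to compare your argument against.

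That said, your verification is correct and is the natural one. The key identity $q(\theta)=\eta_k$ immediately yields both divisibilities in condition~(3), and your observation that $D>0$ prevents leading-term cancellation in $t(x)^2+Dy(x)^2$ is exactly what pins down $\deg q(x)=2\max\{\deg t(x),\deg y(x)\}$. You are also right to flag that the hypothesis ``$r(x)$ represents primes'' is not explicitly listed in the theorem and must be imposed separately; in the paper's applications $r(x)=\Phi_k(x)$, where it is indeed automatic, but for a general irreducible $r(x)\in\Z[x]$ this is a genuine extra condition (e.g.\ $x^2+x+4$ is irreducible but takes only even values).
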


The \emph{cyclotomic families} of elliptic curves are exactly constructed
by the Brezing-Weng method when $r(x)$ is taken to be a cyclotomic polynomial
$\Phi_{n}(x)$ in the above theorem (in this case, $k|n$).

Searching for ideal complete families of elliptic curves is still an important open problem in pairing-based cryptography. So far there is only one known ideal complete family, constructed by Barreto and Naehrig \cite{Barreto2006} with $k=12$ and $D=3$. We state it as follows:
\begin{equation}
\left\{ \begin{array}{ll}
                t(x)=6x^2+1  ,\\
                r(x)=36x^4+36x^3+18x^2+6x+1  ,\\
                q(x)=36x^4+36x^3+24x^2+6x+1.
                 \end{array} \right.
\notag
\end{equation}

\section{Proof of Theorem \ref{Thm1}} \label{Proof1}
In this section, we will prove Theorem \ref{Thm1} by cases.

First, under the assumptions of Theorem \ref{Thm1} and then using Theorem \ref{Brezing-Weng},
we fix some notation.

Put $L=\Q[x]/(r(x))$, where $r(x)=\Phi_{k}(x)$. Let $\zeta_k=\exp(2\pi \sqrt{-1}/k)$.
Notice that there is a canonical isomorphism between $L$ and $\Q(\zeta_k)$,
and $x$ is a primitive $k$-th root of unity in $L$.  From now on, we fix the isomorphism
$$
\sigma: L \to \Q(\zeta_k),\quad x \mapsto \zeta_k.
$$

Then, $t(x)$ maps to $\zeta_{k}^g+1$ for some integer $g$ such that $1\le g \le k$ and $\gcd(g,k)=1$, so
$$
t(x)\equiv x^g+1 \quad \textrm{(mod $r(x)$)}.
$$
Thus, $y(x)$ maps to $-\frac{1}{D}(\zeta_{k}^g-1)\sqrt{-D}$.
Let $y_{1}(x)$ map to $(\zeta_{k}^g-1)\sqrt{-D}$ with degree less than $\deg r(x)$.
If $s(x)$ maps to $\sqrt{-D}$, we have
$$
y_{1}(x)\equiv (x^{g}-1)s(x) \quad \textrm{(mod $r(x)$) with $\deg y_{1}(x)<\deg r(x)$}.
$$

Obviously, we have $y(x)=-\frac{1}{D} y_{1}(x)$ and $\deg y(x)=\deg y_{1}(x)$.
Then, for the $\rho$-value, we have
$$
\rho(t,r,q)=\frac{2\max\{\deg t(x), \deg y_1(x)\}}{\varphi(k)},
$$
where $\varphi$ is Euler's totient function. Thus, for bounding $\rho(t,r,q)$, we need to estimate $\deg t(x)$ or $\deg y_1(x)$.

Here, we want to indicate a simple fact which will be used several times later on.
Let $\mathfrak{f}$ be the conductor of the quadratic field $\Q(\sqrt{-D})$,
and in fact $\mathfrak{f}$ is equal to the absolute value of its discriminant.
By Kronecker-Weber theorem, $\mathfrak{f}$ is the smallest integer $n$
such that the $n$-th cyclotomic field contains $\sqrt{-D}$.
Since $\sqrt{-D}\in \Q(\zeta_k)$ and using the discriminant formula of quadratic fields, we have
\begin{equation}\label{Dk}
\left\{ \begin{array}{ll}
                 D|k  & \textrm{if $D\equiv 3$ (mod 4)},\\
                 4D|k  & \textrm{otherwise}.
                 \end{array} \right.
\end{equation}

\begin{proposition}\label{2^m}
Let $k=2^m$ for some integer $m\ge 0$, then Theorem $\ref{Thm1}$ $(\ref{2^m0})$ is true.
\end{proposition}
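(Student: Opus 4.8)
The plan is to substitute $r(x)=\Phi_{2^{m}}(x)=x^{2^{m-1}}+1$ into the Brezing--Weng setup established at the start of this section and to read off everything from the single reduction rule $x^{2^{m-1}}\equiv -1\pmod{r(x)}$. The first task is to pin down the admissible discriminants and the constraint on $m$. Since $L\cong\Q(\zeta_{2^{m}})$ must contain $\sqrt{-D}$, formula~(\ref{Dk}) applies with $k=2^{m}$; as $D$ is square-free, $D\equiv 3\pmod 4$ is impossible (it would force $D\mid 2^{m}$, hence $D=1\not\equiv 3$), so $4D\mid 2^{m}$ and therefore $D\in\{1,2\}$, with $D=2$ forcing $m\ge 3$ and $D=1$ forcing $m\ge 2$. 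To exclude $m=2$ in the case $D=1$ (the cases $m=0,1$ being impossible since then $L=\Q$), I would compute directly: for $k=4$ one has $r(x)=x^{2}+1$, $s(x)\equiv\pm x$ and $t(x)\equiv\pm x+1$, and a one-line reduction gives $y(x)=\pm t(x)$, so $4q(x)=t(x)^{2}+Dy(x)^{2}=2t(x)^{2}$ with $t(x)$ linear; then $q(x)$ is $\tfrac{1}{2}$ times the square of a linear polynomial, hence not a power of a polynomial representing primes, contradicting the definition of a complete family. This establishes the preliminary assertions of Theorem~\ref{Thm1}~$(1)$.

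Next I would set $g_{0}:=g\bmod 2^{m-1}$; since $\gcd(g,2^{m})=1$, this $g_{0}$ is odd with $1\le g_{0}\le 2^{m-1}-1$, so $t(x)\equiv\pm x^{g_{0}}+1$, $\deg t(x)=g_{0}$, and $\rho(t,r,q)=\dfrac{2\max\{g_{0},\ \deg y_{1}(x)\}}{2^{m-1}}$. Everything then reduces to estimating $\deg y_{1}(x)$, the degree of $(x^{g_{0}}-1)s(x)$ reduced modulo $x^{2^{m-1}}+1$, as a function of $g_{0}$ and of the shape of $s(x)$. For case~$(a)$ we have $\sqrt{-1}=\zeta_{4}=\zeta_{2^{m}}^{2^{m-2}}$, so $s(x)\equiv\pm x^{2^{m-2}}$ and $y_{1}(x)\equiv\pm\bigl(\pm x^{g_{0}+2^{m-2}}-x^{2^{m-2}}\bigr)$; reducing the high power via $x^{2^{m-1}}\equiv -1$ gives $\deg y_{1}(x)=g_{0}+2^{m-2}$ when $g_{0}<2^{m-2}$ and $\deg y_{1}(x)=2^{m-2}$ when $g_{0}>2^{m-2}$ (the value $g_{0}=2^{m-2}$ being excluded since $g_{0}$ is odd while $2^{m-2}$ is even for $m\ge 3$), with no cancellation of the leading term in either regime. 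Hence $\max\{g_{0},\deg y_{1}(x)\}\ge 2^{m-2}+1$, attained at $g_{0}=1$ (or $g_{0}=2^{m-2}+1$), which yields $\rho(t,r,q)\ge 1+\tfrac{1}{2^{m-2}}$; and the choice $g=1$, $s(x)=x^{2^{m-2}}$ produces a Brezing--Weng triple with this $\rho$-value, so $(a)$ follows.

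For case~$(b)$ the Gauss-sum identities $\sqrt{2}=\zeta_{8}+\zeta_{8}^{-1}$ and $\sqrt{-1}=\zeta_{8}^{2}$ give $\sqrt{-2}=\zeta_{8}+\zeta_{8}^{3}=\zeta_{2^{m}}^{2^{m-3}}+\zeta_{2^{m}}^{3\cdot 2^{m-3}}$, so $s(x)\equiv\pm\bigl(x^{2^{m-3}}+x^{3\cdot 2^{m-3}}\bigr)$ and $(x^{g_{0}}-1)s(x)$ is a combination of the four monomials $\pm x^{g_{0}+2^{m-3}}$, $\pm x^{g_{0}+3\cdot 2^{m-3}}$, $-x^{2^{m-3}}$, $-x^{3\cdot 2^{m-3}}$. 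Splitting according to the position of $g_{0}$ relative to $2^{m-3}$, $2^{m-2}$ and $3\cdot 2^{m-3}$, I would reduce these monomials modulo $x^{2^{m-1}}+1$ and verify in each range that the surviving top-degree term of $y_{1}(x)$ has degree $\ge 3\cdot 2^{m-3}$ --- strictly larger except in one boundary range, where instead $\deg t(x)=g_{0}$ is already $\ge 3\cdot 2^{m-3}$, or $\deg y_{1}(x)=3\cdot 2^{m-3}$ exactly. In every case $\max\{\deg t(x),\deg y_{1}(x)\}\ge 3\cdot 2^{m-3}=\tfrac{3}{4}\cdot 2^{m-1}$, so $\rho(t,r,q)\ge 1.5$, proving $(b)$.

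The hard part is the degree estimate in case~$(b)$: one has to be sure that when the top monomial of $(x^{g_{0}}-1)s(x)$ is folded back by $x^{2^{m-1}}\equiv -1$ its coefficient does not cancel against one of the other three monomials, and that the monomial of degree $\ge 3\cdot 2^{m-3}$ I claim survives genuinely does. Keeping track of the four exponents $g_{0}\pm 2^{m-3}$ and $g_{0}\pm 3\cdot 2^{m-3}$ modulo $2^{m-1}$, and using that $g_{0}$ is odd whereas these offsets are even for $m\ge 4$, disposes of this; the small case $m=3$ can be checked by hand. All the remaining steps are routine reductions of powers of $x$ modulo $x^{2^{m-1}}+1$.
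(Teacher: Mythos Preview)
Your approach is essentially the same as the paper's: pin down $D\in\{1,2\}$ via~(\ref{Dk}), exclude $m\le 2$ by direct computation, then bound $\deg y_{1}(x)$ by reducing $(x^{g}-1)s(x)$ modulo $x^{2^{m-1}}+1$. For case~$(a)$ your two-case split on $g_{0}=g\bmod 2^{m-1}$ is in fact a tidy repackaging of the paper's four-case split on~$g$, and gives the same bound $2^{m-2}+1$.

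For case~$(b)$, however, you are working harder than necessary. You propose to split on the position of $g_{0}$ relative to $2^{m-3},2^{m-2},3\cdot 2^{m-3}$ and then verify non-cancellation in each range, treating $m=3$ separately by hand. The paper avoids all of this with a single observation: the term $-x^{3\cdot 2^{m-3}}$ already has degree below $\deg r(x)=2^{m-1}$ and so is never touched by the reduction; moreover, since $g$ is odd, the exponents $2^{m-3}+g$ and $3\cdot 2^{m-3}+g$ have the \emph{opposite} parity to $2^{m-3}$ and $3\cdot 2^{m-3}$, and reduction modulo $x^{2^{m-1}}+1$ preserves the parity of exponents (as $2^{m-1}$ is even). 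Hence the coefficient of $x^{3\cdot 2^{m-3}}$ in $y_{1}(x)$ is always $-1$, giving $\deg y_{1}(x)\ge 3\cdot 2^{m-3}$ immediately. This parity separation works uniformly for $m\ge 3$ (at $m=3$ the fixed exponents are odd and the moving ones even, for $m\ge 4$ the roles swap, but either way there is no collision), so no case split and no separate hand check are needed.
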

\begin{proof}
If $k=1,2$, by \eqref{Dk}, the cyclotomic family $(t(x),r(x),q(x))$ actually does not exist.
So, we must have $m\ge 2$. Later on, we will see that actually we must have $m\ge 3$.

By \eqref{Dk}, if $m=2$, then we must have $D=1$; otherwise if $m\ge 3$, then $D=1,$ or 2.

First, assume that $D=1$. Notice that $\sqrt{-1}=\zeta_{k}^{2^{m-2}}$.
Then $y_{1}(x)\equiv (x^{g}-1)x^{2^{m-2}}\equiv x^{2^{m-2}+g}-x^{2^{m-2}}$ (mod $r(x)$).
Here, $r(x)=x^{2^{m-1}}+1$.
For $m=2$, by direct calculations following Theorem \ref{Brezing-Weng}, we have
$q(x)=\frac{1}{2}(x+1)^2$ if $g=1$, and $q(x)=\frac{1}{2}(x-1)^2$ if $g=3$, both of them do not represent primes,
so the cyclotomic family $(t(x),r(x),q(x))$ actually does not exist. Thus, we must have $m\ge 3$.
Notice that $g$ is an odd integer. It is straightforward to show that
\begin{equation*}
\left\{ \begin{array}{ll}
                 \deg t(x)=g, \, \deg y_{1}(x)=2^{m-2}+g  & \textrm{if $1< g < 2^{m-2}$},\\
                 \deg t(x)=g, \, \deg y_{1}(x)=2^{m-2}  & \textrm{if $2^{m-2}< g< 2^{m-1}$},\\
                 \deg t(x)=g-2^{m-1}, \, \deg  y_{1}(x)=g-2^{m-2} & \textrm{if $2^{m-1}< g< 2^{m-1}+2^{m-2}$},\\
                 \deg t(x)=g-2^{m-1}, \, \deg y_{1}(x)=2^{m-2}  & \textrm{if $2^{m-1}+2^{m-2}< g<k$}.
                 \end{array} \right.
\end{equation*}
Thus, we always have $\max\{\deg t(x),\deg y_{1}(x)\}\ge 2^{m-2}+1$,
which implies that
\begin{equation}
\rho(t,r,q)\ge \frac{2(2^{m-2}+1)}{2^{m-1}}=1+\frac{1}{2^{m-2}},
\end{equation}
where the equality can possibly be achieved when $g=1$, that is $t(x)=x+1$.

Now assume that $D=2$. In this case, we must have $m\ge 3$. It is easy to see that $\sqrt{-2}=\zeta_{8}^{3}+\zeta_{8}=\zeta_{k}^{3\cdot 2^{m-3}}+\zeta_{k}^{2^{m-3}}$.
Then we have $y_{1}(x)\equiv x^{2^{m-2}+2^{m-3}+g}+x^{2^{m-3}+g}-x^{2^{m-2}+2^{m-3}}-x^{2^{m-3}}$ (mod $r(x)$).
Notice that $g$ is an odd integer and $2^{m-2}+2^{m-3}<2^{m-1}$,
then in view of the form of $r(x)$, the term $-x^{2^{m-2}+2^{m-3}}$ is always non-vanishing.
So, $\deg y_{1}(x)\ge 2^{m-2}+2^{m-3}$, which implies that
\begin{equation}\label{2^m2}
\rho(t,r,q)\ge 2(2^{m-2}+2^{m-3})/2^{m-1}=1.5.
\end{equation}

Therefore, we complete the proof of Proposition \ref{2^m}.
\end{proof}

\begin{proposition}\label{p^n}
For any odd prime $p$ and integer $n\ge 1$, Theorem $\ref{Thm1}$ $(\ref{p^n0})$ is true for $k=p^n$.
\end{proposition}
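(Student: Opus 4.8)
The plan is to mimic the structure of Proposition \ref{2^m} but now using the known formula for $\Phi_{p^n}(x)=1+x^{p^{n-1}}+x^{2p^{n-1}}+\cdots+x^{(p-1)p^{n-1}}$, which has degree $\varphi(p^n)=(p-1)p^{n-1}$. Since $\Q(\zeta_{p^n})$ contains a quadratic subfield only if $p\equiv 3\pmod 4$ (in which case $\sqrt{-p}\in\Q(\zeta_p)\subseteq\Q(\zeta_{p^n})$) or via a degree-$4D$ obstruction that \eqref{Dk} forbids here, the first step is to record that necessarily $p\equiv 3\pmod 4$ and $D=p$, exactly as stated. So I may assume $\sqrt{-D}=\sqrt{-p}$, and I need an explicit representative $s(x)$ for $\sqrt{-p}$ modulo $\Phi_{p^n}(x)$; the classical Gauss-sum identity gives $\sqrt{-p}=\sum_{a=1}^{p-1}\left(\frac{a}{p}\right)\zeta_p^{a}$, hence $s(x)\equiv\sum_{a=1}^{p-1}\left(\frac{a}{p}\right)x^{a p^{n-1}}\pmod{\Phi_{p^n}(x)}$, a polynomial of degree $(p-1)p^{n-1}$ supported on the exponents $p^{n-1},2p^{n-1},\ldots,(p-1)p^{n-1}$.

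Next I would analyze $y_1(x)\equiv(x^g-1)s(x)\pmod{\Phi_{p^n}(x)}$ with $\gcd(g,p^n)=1$ and $1\le g\le p^n$. The product $(x^g-1)s(x)$ has exponents of the form $g+j p^{n-1}$ and $j p^{n-1}$ for $1\le j\le p-1$; before reduction its degree is $g+(p-1)p^{n-1}$. Reducing modulo $\Phi_{p^n}(x)$ is where the real work lies: one must track which monomials survive. The cleanest route is to work in $\Q(\zeta_{p^n})$ directly rather than with polynomial representatives — compute the degree of $y_1(x)$ as the largest exponent appearing when $(\zeta_{p^n}^g-1)\sqrt{-p}$ is written in the power basis $1,\zeta_{p^n},\ldots,\zeta_{p^n}^{\varphi(p^n)-1}$, using the relations $\zeta_{p^n}^{\varphi(p^n)+i}=-\sum_{j=1}^{p-1}\zeta_{p^n}^{i+ (j-1)p^{n-1}}$ derived from $\Phi_{p^n}$. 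For $p=3$ everything is small enough to do by hand: $\Phi_{3^n}(x)=x^{2\cdot3^{n-1}}+x^{3^{n-1}}+1$, $s(x)\equiv x^{3^{n-1}}-x^{2\cdot3^{n-1}}$ (up to sign from $\left(\frac{2}{3}\right)=-1$), and a short case analysis on $g\bmod 3^n$ shows $\max\{\deg t,\deg y_1\}\ge 3^{n-1}+1$ with equality at $g=1$ giving $t(x)=x+1$; this yields the exact bound $\rho\ge 1+\frac{1}{3^{n-1}}=\frac{3^{n-1}+1}{3^{n-1}}$ matching part $(2a)$, and one should note $n\ge 2$ is forced because $k=3$ itself is excluded (for $k=3$, $\varphi(k)=2$ and a direct Brezing-Weng computation shows $q(x)$ fails to represent primes, parallel to the $k=4$ analysis in Proposition \ref{2^m}).

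For $p\ge 7$ I would not try to pin down the exact minimum but instead establish the weaker bound $\rho(t,r,q)>1.1$, i.e. $\max\{\deg t,\deg y_1\}>0.55\,(p-1)p^{n-1}$. Here $\deg t(x)$ alone helps for large $g$: if $g>\varphi(p^n)$ then after reducing $x^g\bmod\Phi_{p^n}$ the degree of $t(x)$ is controlled, but when $g$ is small (say $g\le 0.55\varphi(p^n)$) one must extract degree from $y_1(x)$. The key structural fact is that among the $2(p-1)$ exponents $\{j p^{n-1}:1\le j\le p-1\}\cup\{g+j p^{n-1}:1\le j\le p-1\}$ feeding into $(x^g-1)s(x)$, after reduction mod $\Phi_{p^n}$ not too much cancellation can occur — specifically the highest-weight surviving monomial has exponent at least something like $(p-2)p^{n-1}$, because the Legendre-symbol coefficients prevent the top block from vanishing identically. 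Making this cancellation argument precise, uniformly in $p$ and $n$, is the main obstacle: one needs a lower bound on $\deg y_1(x)$ that does not depend on the fine arithmetic of $g$, and the natural tool is to observe that if $\deg y_1(x)$ were small then $(\zeta_{p^n}^g-1)\sqrt{-p}$ would lie in a proper $\Q$-subspace spanned by low powers of $\zeta_{p^n}$, and then apply a Galois-conjugation or trace argument to derive a contradiction, exactly in the spirit of the ``term $-x^{2^{m-2}+2^{m-3}}$ is always non-vanishing'' step in the $D=2$ case of Proposition \ref{2^m}. I expect the write-up to isolate one such surviving monomial explicitly (depending on the residue of $g$ modulo a small power of $p$) and bound its exponent below by roughly $\tfrac{p-1}{2}p^{n-1}$ to $\,(p-2)p^{n-1}$, which comfortably beats $0.55\varphi(p^n)$ once $p\ge 7$.
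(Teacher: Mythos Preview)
Your setup and the $p=3$ case are correct and essentially match the paper's own argument, including the reason $n\ge 2$ is forced when $p=3$. The genuine gap is in the $p\ge 7$ case: you correctly identify that a $g$-uniform lower bound on $\deg y_1(x)$ is the crux, but the mechanisms you propose (a Galois/trace argument, or a case split on the residue of $g$) are not what makes the proof go through, and you do not supply the idea that does.

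The paper's device is entirely elementary and avoids any dependence on $g$. Write $y_1(x)\equiv x^g s(x)-s(x)\pmod{r(x)}$ and let $y_2(x)$ be the reduction of $s(x)=\sum_{a=1}^{p-1}\left(\frac{a}{p}\right)x^{ap^{n-1}}$ modulo $r(x)$. For $n\ge 2$, every exponent occurring in $r(x)=\sum_{a=0}^{p-1}x^{ap^{n-1}}$ is divisible by $p$, so reduction modulo $r(x)$ preserves residue classes of exponents modulo $p$; since each exponent $ap^{n-1}+g$ is coprime to $p$ while every exponent in $y_2(x)$ is divisible by $p$, no cancellation between the two halves of $y_1$ can occur, and $\deg y_1(x)\ge\deg y_2(x)$. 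One then computes, using $\left(\frac{p-1}{p}\right)=-1$, that $y_2(x)=1+\sum_{a=1}^{p-2}\bigl(1+\left(\frac{a}{p}\right)\bigr)x^{ap^{n-1}}$, so the remaining task is to exhibit a quadratic residue $a$ in the range $(\tfrac{11}{20}p,\,p-1)$. The paper does this by taking $a=b^2$ for any integer $b$ with $\sqrt{11p/20}<b<\sqrt{p}$, which exists for every $p\ge 7$; the coefficient of $x^{b^2 p^{n-1}}$ in $y_2$ is then $2$, giving $\deg y_2(x)>\tfrac{11}{20}p^n$ and hence $\rho>1.1$. Your expectation that the surviving monomial should depend on $g$ is precisely what the argument sidesteps. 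Note also that the $p$-divisibility separation collapses when $n=1$ (the exponents of $r(x)$ are then $0,1,\ldots,p-1$); the paper handles $k=p$ separately by combining Okano's earlier argument with the same square-residue observation.
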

\begin{proof}
If $p\equiv 1$ (mod 4), by \eqref{Dk}, we only possibly have $D=1$ or $k=p$, both of which lead to $4|k$. 
This is impossible, so the cyclotomic family $(t(x),r(x),q(x))$ actually does not exist.
So, we must have $p\equiv 3$ (mod 4).

Since $k=p^n$ with $p\equiv 3$ (mod 4) and applying \eqref{Dk}, we must have $D=p$. It is well-known that
$$
\sqrt{-p}=\sum\limits_{a=1}^{p-1}\left(\frac{a}{p}\right)\zeta_{p}^{a}=
\sum\limits_{a=1}^{p-1}\left(\frac{a}{p}\right)\zeta_{k}^{ap^{n-1}},
$$
where $(\frac{\cdot}{\cdot})$ is the Legendre symbol, see \cite[Theorem 7 on p. 349]{Shafarevich1966}.
Thus,
\begin{equation}\label{p^ny_1}
y_{1}(x)\equiv \sum\limits_{a=1}^{p-1}\left(\frac{a}{p}\right)x^{ap^{n-1}+g}-
\sum\limits_{a=1}^{p-1}\left(\frac{a}{p}\right)x^{ap^{n-1}} \quad
\textrm{(mod $r(x)$)}.
\end{equation}

We first suppose that $p=3$. Then we have
$$
y_{1}(x)\equiv 2x^{3^{n-1}+g}+x^{g}-2x^{3^{n-1}}-1 \quad \textrm{(mod $r(x)$)},
$$
where $r(x)=x^{2\cdot 3^{n-1}}+x^{3^{n-1}}+1$.
For $n=1$, that is $k=3$, by direct calculations following Theorem \ref{Brezing-Weng}, we have
$q(x)=(x+1)^2$ if $g=1$, and $q(x)=x^2$ if $g=2$, both of them do not represent primes,
so the cyclotomic family $(t(x),r(x),q(x))$ actually does not exist.
So, when $p=3$, we must have $n\ge 2$.
Notice that $\gcd(g,3)=1$ and $n\ge 2$. It is straightforward to show that
\begin{equation}\label{3^n3}
\left\{ \begin{array}{ll}
                 \deg t(x)=g, \, \deg y_{1}(x)=3^{n-1}+g  & \textrm{if $1\le g<3^{n-1}$},\\
                 \deg t(x)=g, \, \deg y_{1}(x)=g  & \textrm{if $3^{n-1}< g<2\cdot 3^{n-1}$},\\
                 \deg t(x)=g-3^{n-1}, \, \deg y_{1}(x)=g-3^{n-1} & \textrm{if $2\cdot 3^{n-1}< g<k$}.
                 \end{array} \right.
\notag
\end{equation}
Thus, we always have $\max\{\deg t(x),\deg y_{1}(x)\}\ge 3^{n-1}+1$, which implies that
\begin{equation}
\rho(t,r,q)\ge \frac{2(3^{n-1}+1)}{2\cdot 3^{n-1}}=1+\frac{1}{3^{n-1}},
\end{equation}
where the equality can possibly be achieved when $g=1$, that is $t(x)=x+1$.

Now we assume that $p>3$ and $n\ge 2$. By the choice of $p$, we have $p\ge 7$. Put
$$
y_{2}(x)\equiv \sum\limits_{a=1}^{p-1}\left(\frac{a}{p}\right)x^{ap^{n-1}}
\quad \textrm{(mod $r(x)$)\quad with $\deg y_{2}(x)<\deg r(x)$}.
$$
Since $r(x)=\sum\limits_{a=0}^{p-1}x^{ap^{n-1}}$ and $n\ge 2$,
every exponent of $x$ appearing in $r(x)$ is divisible by $p$.
Notice that every $ap^{n-1}+g$ is coprime to $p$.
So $$\deg y_{1}(x)\ge \deg y_{2}(x).$$
 Then it suffices to consider $y_{2}(x)$.

Since $\left(\frac{p-1}{p}\right)=-1$, it is easy to see that
\begin{equation}
y_{2}(x)\equiv 1+\sum\limits_{a=1}^{p-2}(1+\left(\frac{a}{p}\right))x^{ap^{n-1}}
\quad \textrm{(mod $r(x)$)}.
\notag
\end{equation}
Note that the degree of the above polynomial on the right hand side is less than $\deg r(x)$,
we have $y_{2}(x)= 1+\sum\limits_{a=1}^{p-2}(1+\left(\frac{a}{p}\right))x^{ap^{n-1}}$.
Note that, for $p\ge 7$, there exists an integer $b$ such that $\sqrt{11p/20}<b<\sqrt{p}$.
Indeed, when $p\ge 17$, we have $\sqrt{p}-\sqrt{11p/20}>1$; for other $p\ge 7$,
one can verify it by direct computation. Combining with $\left(\frac{p-1}{p}\right)=-1$, we have
\begin{equation}\label{Legendre}
\left(\frac{b^{2}}{p}\right)=1 \quad \textrm{and} \quad 11p/20<b^2<p-1.
\end{equation}
Thus, $\deg y_2(x)\ge b^2p^{n-1}>11p^n/20$, which implies that
\begin{equation}\label{p^n7}
\rho(t,r,q)>(2\cdot 11p^n/20)/(p^{n-1}(p-1))>1.1.
\end{equation}

Finally, we assume that $p>3$ and $n=1$. Okano \cite{Okano2012} showed that $\rho(t,r,q)\ne 1$.
Indeed, by combining \eqref{Legendre} and the later part in \cite[Proof of Proposition 4.2]{Okano2012},
one can similarly deduce that $\rho(t,r,q)>1.1$.

Therefore, Proposition \ref{p^n} has been proved.
\end{proof}

\begin{proposition}\label{2p^n}
For any odd prime $p$ and integer $n\ge 1$,  Theorem $\ref{Thm1}$ $(\ref{p^n0})$ is true for $k=2p^n$.
\end{proposition}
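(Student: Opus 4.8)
The plan is to reduce the case $k=2p^n$ to the case $k=p^n$, which was already handled in Proposition~\ref{p^n}. The key observation is that $\Phi_{2p^n}(x)=\Phi_{p^n}(-x)$, so there is a natural way to transport the analysis: if $\zeta_{2p^n}$ is a primitive $2p^n$-th root of unity, then $-\zeta_{2p^n}$ is a primitive $p^n$-th root of unity, and vice versa. Concretely, working in $L=\Q[x]/(\Phi_{2p^n}(x))$ with $x\mapsto \zeta_{2p^n}$, I would set $w=-x$, so $w$ is a primitive $p^n$-th root of unity and $L\cong\Q[w]/(\Phi_{p^n}(w))$. Under this change of variable a polynomial of degree $d$ in $x$ with $\deg<\varphi(2p^n)=\varphi(p^n)$ corresponds to a polynomial of the same degree in $w$, so degrees of the reduced representatives $t(x)$, $y_1(x)$ are preserved.

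First I would run the preliminary constraints: by \eqref{Dk}, since $\sqrt{-D}\in\Q(\zeta_{2p^n})=\Q(\zeta_{p^n})$ (as $2p^n$-th and $p^n$-th cyclotomic fields coincide when $p$ is odd), we must have $p\equiv 3\pmod 4$ and $D=p$, exactly as in Proposition~\ref{p^n}; if $p\equiv 1\pmod 4$ the family does not exist. Next, $t(x)$ maps to $\zeta_{2p^n}^g+1$ for $g$ coprime to $2p^n$ (so $g$ odd, $p\nmid g$), and $y_1(x)$ maps to $(\zeta_{2p^n}^g-1)\sqrt{-p}$. Writing $\zeta_{2p^n}^g=(-1)^g w^{-g}=-w^{-g}$ (using $g$ odd, where $w^{-g}$ means the appropriate positive power mod $p^n$), and using the Gauss-sum expression $\sqrt{-p}=\sum_{a=1}^{p-1}\left(\frac{a}{p}\right)w^{ap^{n-1}}$ from \cite[Theorem 7 on p.~349]{Shafarevich1966}, I would expand $y_1$ as a $\Z$-linear combination of powers of $w$ and reduce modulo $\Phi_{p^n}(w)=\sum_{a=0}^{p-1}w^{ap^{n-1}}$. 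This is structurally identical to \eqref{p^ny_1}, just with an extra sign and a reindexing $g\mapsto -g\bmod p^n$; since we are taking a maximum over all admissible $g$ (equivalently over all $g'=-g\bmod p^n$ coprime to $p$, which is the same set as in Proposition~\ref{p^n} after reindexing), the bounds on $\max\{\deg t,\deg y_1\}$ carry over verbatim.

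Concretely this means: for $p=3$ I would recover $\max\{\deg t(x),\deg y_1(x)\}\ge 3^{n-1}+1$ (with the degenerate case $n=1$, i.e.\ $k=6$, checked directly by computing $q(x)$ and observing it does not represent primes, forcing $n\ge 2$), giving $\rho(t,r,q)\ge 1+\tfrac{1}{3^{n-1}}$, attained possibly at $g$ corresponding to $t(x)=x+1$; for $p\ge 7$, using the same integer $b$ with $11p/20<b^2<p-1$ and $\left(\frac{b^2}{p}\right)=1$ from \eqref{Legendre}, I would get a non-vanishing term of degree $\ge b^2p^{n-1}>11p^n/20$ in $y_1$ (after checking the reduction does not cancel it, just as in Proposition~\ref{p^n}), hence $\rho(t,r,q)>1.1$.

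The main obstacle I anticipate is bookkeeping rather than conceptual: I must be careful that the substitution $x\mapsto -w$ interacts correctly with the reduction modulo the cyclotomic polynomial — i.e.\ that reducing in $x$ mod $\Phi_{2p^n}(x)$ and then substituting is the same as substituting and reducing mod $\Phi_{p^n}(w)$, which it is because $\Phi_{2p^n}(x)=\Phi_{p^n}(-x)$ exactly — and that the non-cancellation arguments (the claim that certain monomials survive the reduction) still hold after the sign twist, since the signs could in principle conspire to cancel a term. A direct check that the relevant coefficient is odd, or lies strictly between the extreme exponents of $\Phi_{p^n}$, settles this exactly as before. Once these points are verified, Proposition~\ref{2p^n} follows, completing Theorem~\ref{Thm1}~(\ref{p^n0}).
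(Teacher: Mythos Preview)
Your approach is essentially the same as the paper's: set $z=-x$ so that $\Phi_{2p^n}(x)=\Phi_{p^n}(z)$ and transport the computation of Proposition~\ref{p^n} (the paper also notes that $t$ becomes $-z^g+1$ rather than $z^g+1$, which is irrelevant for degrees, and likewise defers the case $p>3$, $n=1$ to Okano via \eqref{Legendre}). One small slip in your bookkeeping: with $w=-x$ you have $\zeta_{2p^n}^g = x^g = (-w)^g = -w^{g}$ for odd $g$, not $-w^{-g}$, so no reindexing $g\mapsto -g$ is actually needed---but since you range over all admissible $g$ anyway, this does not affect the argument.
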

\begin{proof}
If $p\equiv 1$ (mod 4), similarly as before, the cyclotomic family $(t(x),r(x),q(x))$ does not exist.
So, we must have $p\equiv 3$ (mod 4).

Since $k=2p^n$ with $p\equiv 3$ (mod 4), by \eqref{Dk}, we must have $D=p$.
Notice that $\zeta_k=-\zeta_{p^n}$, we have
$$
\sqrt{-p}=\sum\limits_{a=1}^{p-1}\left(\frac{a}{p}\right)\zeta_{p^n}^{ap^{n-1}}=
\sum\limits_{a=1}^{p-1}\left(\frac{a}{p}\right)(-\zeta_{k})^{ap^{n-1}}.
$$
Then
$$
y_{1}(x)\equiv \sum\limits_{a=1}^{p-1}\left(\frac{a}{p}\right)(-x)^{ap^{n-1}+g}-
\sum\limits_{a=1}^{p-1}\left(\frac{a}{p}\right)(-x)^{ap^{n-1}} \quad
\textrm{(mod $r(x)$)}.
$$
Notice that $r(x)=\sum\limits_{a=0}^{p-1}(-x)^{ap^{n-1}}$. If we put $z=-x$,
in view of \eqref{p^ny_1}, we almost reduce our proof to the case of Proposition \ref{p^n}
except that $t(x)$ is congruent to $-z^g+1$ modulo $\sum\limits_{a=0}^{p-1}z^{ap^{n-1}}$ after substitution.
Thus, Proposition \ref{2p^n} can be proved similarly when $p=3, n\ge 1$, or $p> 3, n\ge 2$.

Now, we assume that $p>3$ and $n=1$. Okano \cite{Okano2012} showed that $\rho(t,r,q)\ne 1$.
Indeed, by combining \eqref{Legendre} and the later part in \cite[Proof of Proposition 4.2]{Okano2012},
one can also deduce that $\rho(t,r,q)>1.1$.

Therefore, we complete the proof of Proposition \ref{2p^n}.
\end{proof}

To handle more complicated cases, we need some preparations.

For an odd prime $p$, integers $m\ge 1$ and $n\ge 1$, it is well-known that
$$
\Phi_{2^mp^n}(x)=\sum\limits_{a=0}^{p-1}(-1)^{a}x^{a2^{m-1}p^{n-1}}.
$$
 For each integer $i\ge 1$, define
$$
f_{i}(x)\equiv x^{i} \quad \textrm{(mod $\Phi_{2^mp^n}(x)$) with $\deg f_{i}(x)<\deg \Phi_{2^mp^n}(x)$}.
$$
By definition, every $f_{i}(x)$ is well-defined and unique.
\begin{lemma}
For integer $i$ with $1\le i<2^{m-1}p^{n-1}(p-1)$, we have $f_{i}(x)=x^i$ .
\end{lemma}

\begin{lemma}\label{fn}
For any odd prime $p$ and positive integers $j,m,n$, let $i=j\cdot 2^{m-1}p^{n}$. Then, we have
\begin{equation}
\left\{ \begin{array}{ll}
                 f_{i}(x)=(-1)^j,\\
                 f_{i-h}(x)=\sum\limits_{a=0}^{p-2}(-1)^{a+j}x^{(a+1)2^{m-1}p^{n-1}-h} & \textrm{if $1\le h\le 2^{m-1}p^{n-1}$},\\
                 f_{i+h}(x)=(-1)^{j}x^h  & \textrm{if $1\le h< 2^{m-1}p^{n-1}(p-1)$}.
                 \end{array} \right.
\notag
\end{equation}
\end{lemma}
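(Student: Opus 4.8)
The plan is to reduce everything to two elementary congruences modulo $\Phi:=\Phi_{2^{m}p^{n}}(x)$, both read off directly from the displayed formula $\Phi=\sum_{a=0}^{p-1}(-1)^{a}x^{a2^{m-1}p^{n-1}}$. Throughout I write $N:=2^{m-1}p^{n-1}$, so that $\deg\Phi=N(p-1)$ and $i=jNp$.

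First I would establish the congruence $x^{Np}\equiv-1\pmod{\Phi}$. Since $p$ is odd, one has the polynomial identity $\frac{y^{p}+1}{y+1}=\sum_{a=0}^{p-1}(-1)^{a}y^{a}$; substituting $y=x^{N}$ gives $x^{Np}+1=(x^{N}+1)\,\Phi$, which is exactly this congruence (equivalently, $\zeta^{k/2}=-1$ for a primitive $k$-th root of unity, $k=2^{m}p^{n}$). Iterating it yields $x^{i}=(x^{Np})^{j}\equiv(-1)^{j}$, and since the constant $(-1)^{j}$ has degree $0<\deg\Phi$, it is already the reduced representative, so $f_{i}(x)=(-1)^{j}$. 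Likewise, for $1\le h<N(p-1)$ we get $x^{i+h}=x^{i}x^{h}\equiv(-1)^{j}x^{h}$, and $(-1)^{j}x^{h}$ has degree $h<\deg\Phi$, hence $f_{i+h}(x)=(-1)^{j}x^{h}$. This disposes of the first and third formulas.

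Next, for the middle formula, I would use the top-degree relation coming from $\Phi\equiv0$: because $p-1$ is even, the leading term of $\Phi$ is $+x^{(p-1)N}$, so $x^{(p-1)N}\equiv\sum_{a=0}^{p-2}(-1)^{a+1}x^{aN}\pmod{\Phi}$. For $1\le h\le N$ write $Np-h=(p-1)N+(N-h)$ with $0\le N-h<N$; multiplying the last congruence by $x^{N-h}$ gives $x^{Np-h}\equiv\sum_{a=0}^{p-2}(-1)^{a+1}x^{(a+1)N-h}$, whose exponents $(a+1)N-h$ ($a=0,\dots,p-2$) are strictly increasing and lie in $[0,(p-1)N-1]$, so the right-hand side is already reduced. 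Finally, since $i-h=(j-1)Np+(Np-h)$, we obtain $x^{i-h}\equiv(-1)^{j-1}\sum_{a=0}^{p-2}(-1)^{a+1}x^{(a+1)N-h}=\sum_{a=0}^{p-2}(-1)^{a+j}x^{(a+1)N-h}$, which is the asserted value of $f_{i-h}(x)$; note $i-h\ge Np-N=(p-1)N\ge1$, so $f_{i-h}$ is indeed defined.

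I do not expect a real obstacle: the argument rests only on the two small algebraic facts that $p$ is odd (giving $y^{p}+1=(y+1)\sum(-1)^{a}y^{a}$) and that $p-1$ is even (so the leading coefficient of $\Phi$ is $+1$). The one point needing genuine care is the bookkeeping on exponent ranges — after each substitution one must check that the resulting polynomial already has degree $<\deg\Phi=N(p-1)$, so that it literally equals the corresponding $f_{\bullet}(x)$ rather than merely being congruent to it.
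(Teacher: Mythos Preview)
Your argument is correct. The key congruence $x^{Np}\equiv-1\pmod{\Phi}$, obtained from the factorization $x^{Np}+1=(x^{N}+1)\Phi$, immediately yields all three formulas once one checks (as you do) that the resulting polynomials already have degree below $\deg\Phi=N(p-1)$.

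The paper instead proceeds by induction on $j$: the base case $j=1$ is declared straightforward, and for the inductive step one writes $b=jNp-N$, observes that $b-1=(j-1)Np+\bigl(N(p-1)-1\bigr)$ so that the third formula for $j-1$ gives $f_{b-1}(x)=(-1)^{j-1}x^{N(p-1)-1}$, and then multiplies by $x$ and reduces via $\Phi$ to obtain $f_{b}(x)$; the remaining formulas ``follow easily''. Your route is more direct: rather than discovering $x^{Np}\equiv-1$ implicitly through the induction, you isolate it at the outset via the elementary identity $y^{p}+1=(y+1)\sum_{a=0}^{p-1}(-1)^{a}y^{a}$ (valid for odd $p$), which makes the dependence on $j$ trivial and avoids the bookkeeping of the inductive step. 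The paper's induction, on the other hand, stays entirely within the $f_{i}$ formalism and never invokes an external polynomial identity. Both are short; yours is cleaner.
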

\begin{proof}
We prove this lemma by induction. For $j=1$, it is straightforward to verify the desired formulas. Now for $j\ge 2$, assume that the desired formulas are true for $j-1$.

Put $b=j\cdot 2^{m-1}p^{n}-2^{m-1}p^{n-1}$. Then all we need to do is to compute $f_{b}(x)$. Notice that $b-1=(j-1)2^{m-1}p^{n}+2^{m-1}p^{n-1}(p-1)-1$, by the assumption we have $f_{b-1}(x)=(-1)^{j-1}x^{2^{m-1}p^{n-1}(p-1)-1}$. Thus
\begin{align*}
f_{b}(x)&\equiv (-1)^{j-1}x^{2^{m-1}p^{n-1}(p-1)}\\
&\equiv \sum\limits_{a=0}^{p-2}(-1)^{a+j}x^{a2^{m-1}p^{n-1}} \quad \textrm{(mod $\Phi_{2^mp^n}(x)$)}.
\end{align*}
Then the other formulas follow easily.
\end{proof}

\begin{proposition}\label{2^mp^n3}
For any odd prime $p$ with $p\equiv 3$ {\rm(mod 4)} and integers, $m\ge 2$ and $n\ge 1$,
 Theorem $\ref{Thm1}$ $(\ref{2^mp^n0})$ is true for $k=2^mp^n$.
\end{proposition}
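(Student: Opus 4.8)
The plan is to run, for each admissible $D$, the set‑up already in place at the start of Section~\ref{Proof1}: put $r(x)=\Phi_{2^mp^n}(x)$, $t(x)\equiv x^g+1$ and $y_1(x)\equiv (x^g-1)s(x)\pmod{r(x)}$ with $\gcd(g,2^mp^n)=1$ and $s(x)$ a polynomial mapping to $\sqrt{-D}$, and bound $\max\{\deg t(x),\deg y_1(x)\}$ from below uniformly in $g$, since $\rho(t,r,q)=2\max\{\deg t(x),\deg y_1(x)\}/\varphi(k)$. First, \eqref{Dk} together with $p\equiv 3\pmod 4$ leaves only $D\in\{1,2,p,2p\}$, with $m\ge 3$ forced whenever $D$ is even. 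I would then record an explicit $s(x)$ in each case: $s(x)=x^{k/4}$ for $D=1$; $s(x)=x^{3k/8}+x^{k/8}$ for $D=2$ (as in Proposition~\ref{2^m}); the Gauss sum $s(x)=\sum_{a=1}^{p-1}\left(\frac ap\right)x^{ak/p}$ for $D=p$; and, via $\sqrt{-2p}=-\sqrt{-1}\,\sqrt{-2}\,\sqrt{-p}$, the product of $-x^{k/4}$ with the $D=2$ and $D=p$ expressions for $D=2p$.

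The common mechanism is the following. Write $N=2^{m-1}p^{n-1}$, so that $\varphi(k)=N(p-1)$ and $\Phi_{2^mp^n}(x)=\sum_{a=0}^{p-1}(-1)^ax^{aN}$ has all its exponents divisible by $N$; consequently reduction modulo $r(x)$ (as made explicit by the two lemmas preceding this proposition --- that $f_i(x)=x^i$ for $1\le i<\varphi(k)$, and Lemma~\ref{fn}) sends a monomial $x^i$ to a polynomial all of whose exponents are $\equiv i\pmod N$. Since $\gcd(g,2^mp^n)=1$ forces $g$ to be a unit modulo $N$, and $m\ge 2$ (and $m\ge 3$ when $D$ is even) rules out the relevant non‑unit residues such as $N/2$, the part $x^gs(x)$ of $y_1(x)$ and the part $-s(x)$ occupy disjoint residue classes modulo $N$, and the distinguished top monomial of the reduction of $-s(x)$ cannot be cancelled by the lower ones either. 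Reducing $s(x)$ first keeps all the exponents that arise below $3Np$, so Lemma~\ref{fn} applies. Carrying this out: for $D=1$ the term $-x^{k/4}$ survives, whence $\deg y_1(x)\ge k/4=\tfrac{p}{2(p-1)}\varphi(k)$ and $\rho(t,r,q)\ge p/(p-1)$; for $D=2$ with $p>3$ the term $-x^{3k/8}$ is already reduced and survives, so $\deg y_1(x)\ge 3k/8>\tfrac34\varphi(k)$ and $\rho(t,r,q)>1.5$; for $D=2$ with $p=3$ one first reduces $x^{3k/8}$ (now above $\varphi(k)$) to $x^{5N/4}-x^{N/4}$ and finds the surviving monomial $-x^{5N/4}$, giving $\rho(t,r,q)\ge 1.25$; and for $D=2p$ one shows similarly that $\deg y_1(x)>\tfrac34\varphi(k)$, so $\rho(t,r,q)>1.5$ (this being the most delicate case).

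For $D=p$ the two regimes are treated separately. When $p=3$, one computes $\sqrt{-3}\equiv 2x^N-1$, hence $y_1(x)\equiv 2x^{g+N}-x^g-2x^N+1\pmod{r(x)}$; using $x^{2N}\equiv x^N-1$, $x^{3N}\equiv -1$, $x^{6N}\equiv 1$ to reduce $x^g$ and $x^{g+N}$, a short case split on the size of $g$ and on $g\bmod N$ shows $\max\{\deg t(x),\deg y_1(x)\}\ge N+1=\tfrac12\varphi(k)+1$, so $\rho(t,r,q)\ge 1+\tfrac1N=1+\tfrac1{2^{m-1}3^{n-1}}$. When $p\ge 7$, the non‑cancellation argument already gives $\deg y_1(x)\ge\deg\big(s(x)\bmod r(x)\big)$, and one then repeats the argument of Proposition~\ref{p^n} --- passing to the reduced Gauss sum and invoking the quadratic‑residue estimate \eqref{Legendre} to pin a nonzero coefficient at a sufficiently high exponent --- to conclude $\rho(t,r,q)>1.1$. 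Finally, for the two ``smallest possible value'' claims I would display an admissible $g$ attaining the bound in the formula of Theorem~\ref{Brezing-Weng}: $g=1$ works for $D=p$, $p=3$, giving $t(x)=x+1$, $r(x)=\Phi_{2^m3^n}(x)$ and $q(x)=\tfrac14\big((x+1)^2+3y(x)^2\big)$; for $D=1$ one picks $g$ in the residue window in which $x^{g+k/4}$ collapses below degree $k/4$. In both cases this produces a triple of the asserted $\rho$‑value, its genuine completeness being left open as the paper notes.

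The main obstacle is the uniform‑in‑$g$ non‑cancellation step in the cases where $s(x)$ does not reduce to a short polynomial, above all $D=2p$: there $s(x)$ is a long alternating Gauss‑type sum whose reduction modulo $r(x)$ is hardest to control, so certifying that a chosen high‑degree monomial of $y_1(x)$ neither is annihilated by further reduction nor is cancelled requires the most careful tracking of signs and of exponents modulo $N$. A secondary difficulty is the $D=p$, $p=3$ lower bound, which is tight --- only one above $\tfrac12\deg r(x)$ --- and so forces the case analysis on $g$ to be genuinely exhaustive, with Lemma~\ref{fn} applied repeatedly to perform the reductions.
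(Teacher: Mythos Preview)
Your outline is correct and follows essentially the same route as the paper. The paper likewise narrows $D$ to $\{1,2,p,2p\}$ via \eqref{Dk}, uses the same explicit $s(x)$ in each case, and relies on the same non-cancellation mechanism to separate $x^gs(x)$ from $-s(x)$: what you phrase uniformly as ``exponents are preserved modulo $N=2^{m-1}p^{n-1}$'' the paper phrases as a parity argument for $D\in\{1,2,p\}$ (since $r(x)$ is a polynomial in $x^2$ and $g$ is odd) and as a $\bmod\,4$ argument in a substituted variable for $D=2p$. For $D=p$, $p\ge 7$, the paper too passes to $y_2(x)\equiv s(x)\bmod r(x)$, substitutes $z=x^{N}$ so that $r(x)=\Phi_{2p}(z)$, computes the reduced Gauss sum in closed form as $\big(\tfrac{2}{p}\big)\sum_{a=0}^{p-2}\big(1+\big(\tfrac{a}{p}\big)\big)(-z)^a$, and then invokes \eqref{Legendre}. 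For $D=p$, $p=3$, your case split on $g$ is exactly what the paper does. For the ``smallest possible value'' at $D=1$, the paper makes your ``residue window'' explicit: $g=p^n+2$ when $m=2$ and $g=2^{m-2}p^n+1$ when $m>2$, and checks via Lemma~\ref{fn} that $\deg y_1(x)=2^{m-2}p^n$.

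The one place you flag as the main obstacle, $D=2p$, is where the paper's proof has the real content you are missing. The paper substitutes $z=x^{N/4}=x^{2^{m-3}p^{n-1}}$ so that $r(x)=\Phi_{8p}(z)$, splits the reduced $s$-part as $y_3(z)=y_{31}(z)-y_{32}(z)$ with exponents of $y_{31}$ and $y_{32}$ lying in distinct residue classes modulo $4$ (hence $\deg y_3=\max\{\deg y_{31},\deg y_{32}\}$), and then does a genuine case split on $p\bmod 8$. In each of $p\equiv 3,7\pmod 8$ it applies Lemma~\ref{fn} to write $y_{32}(z)$ explicitly, pinpoints a specific index $b$ (namely $b=(3p\mp 1)/4$) with known Legendre symbol, and shows that two contributions to $z^{3p\mp 2}$ add rather than cancel, giving $\deg y_{32}(z)\ge 3p\mp 2$ and hence $\rho(t,r,q)\ge (3p\mp 2)/(2(p-1))>1.5$. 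Your general $\bmod\,N$ non-cancellation handles the separation of $x^gs(x)$ from $-s(x)$, but it does not by itself give a lower bound on $\deg\big(s(x)\bmod r(x)\big)$; that requires this explicit computation, and the $p\bmod 8$ dichotomy is not avoidable.
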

\begin{proof}
By \eqref{Dk}, if $m=2$, then $D=1$ or $D=p$; otherwise if $m\ge 3$, then $D=1,2,p$, or $2p$.

First, assume that $D=1$. Since $\sqrt{-1}=\zeta_{k}^{2^{m-2}p^n}$, we have
$$
y_{1}(x)\equiv x^{2^{m-2}p^n}(x^g-1)\equiv x^{2^{m-2}p^n+g}-x^{2^{m-2}p^n} \quad \textrm{(mod $r(x)$)}.
$$
Here, $r(x)=\sum\limits_{a=0}^{p-1}(-1)^{a}x^{a2^{m-1}p^{n-1}}$,
which is also a polynomial with respect to $x^{2}$.
Since $g$ is an odd integer, the two integers $2^{m-2}p^n+g$ and $2^{m-2}p^n$ have different parities.
Note that $2^{m-2}p^n< \deg r(x)$. So, the term $-x^{2^{m-2}p^n}$ does not vanish,
and then we have $\deg y_{1}(x)\ge 2^{m-2}p^n$, which implies that
\begin{equation}
\rho(t,r,q)\ge \frac{2\cdot 2^{m-2}p^n}{2^{m-1}p^{n-1}(p-1)}=\frac{p}{p-1}.
\end{equation}
We claim that the above equality can possibly be achieved. Indeed, if $m=2$, we choose $g=p^n+2<k$,
then $\gcd(g,k)=1$, and by Lemma \ref{fn}, we have $y_1(x)=-x^2-x^{p^n}$, so we get the equality.
Otherwise if $m>2$, we choose $g=2^{m-2}p^n+1<k$,
then $\gcd(g,k)=1$, and by Lemma \ref{fn}, we have $y_1(x)=-x-x^{2^{m-2}p^n}$,
so we still get the equality.

Suppose that $D=2$. Note that in this case we must have $m\ge 3$. Since $\sqrt{-2}=\zeta_{8}^{3}+\zeta_{8}=\zeta_{k}^{3\cdot 2^{m-3}p^n}+\zeta_{k}^{2^{m-3}p^n}$, we have
$$
y_{1}(x)\equiv x^{3\cdot 2^{m-3}p^n+g}+x^{2^{m-3}p^n+g}-x^{3\cdot 2^{m-3}p^n}-x^{2^{m-3}p^n} \quad
\textrm{(mod $r(x)$)}.
$$
Notice that the two integers $3\cdot 2^{m-3}p^n+g$ and $2^{m-3}p^n+g$ have the same parity,
and the two integers $3\cdot 2^{m-3}p^n$ and $2^{m-3}p^n$ have the same parity,
but $2^{m-3}p^n+g$ and $2^{m-3}p^n$ have different parities. For $p>3$,
note that $3\cdot 2^{m-3}p^n<\deg r(x)$, then the term $-x^{3\cdot 2^{m-3}p^n}$ does not vanish,
so we have $\deg y_{1}(x)\ge 3\cdot 2^{m-3}p^n$, which implies that
\begin{equation}\label{2^mp^n27}
\rho(t,r,q)\ge 3\cdot 2^{m-2}p^n/(2^{m-1}p^{n-1}(p-1))>1.5.
\end{equation}

If $D=2$ and $p=3$, we have
$$
y_{1}(x)\equiv x^{2^{m-3}3^{n+1}+g}+x^{2^{m-3}3^n+g}-x^{2^{m-3}3^{n+1}}-x^{2^{m-3}3^n} \quad
\textrm{(mod $r(x)$)},
$$
where $r(x)=x^{2^{m}3^{n-1}}-x^{2^{m-1}3^{n-1}}+1$. Then we obtain
$$
y_{1}(x)\equiv x^{2^{m-3}3^{n+1}+g}+x^{2^{m-3}3^n+g}-x^{5\cdot 2^{m-3}3^{n-1}}+x^{2^{m-3}3^{n-1}}-x^{2^{m-3}3^n} \quad
\textrm{(mod $r(x)$)}.
$$
Notice that $\deg r(x)>5\cdot 2^{m-3}3^{n-1}$. As before, we have $\deg y_{1}(x)\ge 5\cdot 2^{m-3}3^{n-1}$,
which implies that
\begin{equation}\label{2^mp^n23}
\rho(t,r,q)\ge 10\cdot 2^{m-3}3^{n-1}/(2^{m}3^{n-1})=1.25.
\end{equation}

Now suppose that $D=p$. Since $\sqrt{-p}=\sum\limits_{a=1}^{p-1}\left(\frac{a}{p}\right)\zeta_{k}^{a2^{m}p^{n-1}}$, we have
$$
y_{1}(x)\equiv \sum\limits_{a=1}^{p-1}\left(\frac{a}{p}\right)x^{a2^{m}p^{n-1}+g}-
\sum\limits_{a=1}^{p-1}\left(\frac{a}{p}\right)x^{a2^{m}p^{n-1}} \quad
\textrm{(mod $r(x)$)}.
$$
Put
$$
y_{2}(x)\equiv \sum\limits_{a=1}^{p-1}\left(\frac{a}{p}\right)x^{a2^{m}p^{n-1}} \quad \textrm{(mod $r(x)$)\quad with $\deg y_{2}(x)<\deg r(x)$}.
$$
Notice that $r(x)$ is also a polynomial with respect to $x^2$,
and every $a2^{m}p^{n-1}+g$ is an odd integer.
So, the leading term of $-y_2(x)$ appears in $y_1(x)$.
Thus, we have
$$\deg y_{1}(x)\ge \deg y_{2}(x).$$
Then we only need to consider $y_{2}(x)$. Put $z=x^{2^{m-1}p^{n-1}}$, then $r(x)=\Phi_{2p}(z)$.
Define
$$
y_{3}(z)\equiv \sum\limits_{a=1}^{p-1}\left(\frac{a}{p}\right)z^{2a} \quad \textrm{(mod $\Phi_{2p}(z)$)\quad with $\deg y_{3}(z)<\deg \Phi_{2p}(z)$}.
$$
Then $\deg y_{2}(x)=2^{m-1}p^{n-1}\deg y_{3}(z)$.

Applying Lemma \ref{fn} by setting $m=1$ and $n=1$, and noticing that $\left(\frac{a}{p}\right)=\left(\frac{4a}{p}\right)$
for any integer $a$, we get
\begin{align*}
y_{3}(z)
&= \sum\limits_{a=1}^{(p-3)/2}\left(\frac{a}{p}\right)z^{2a}+
\left(\frac{2}{p}\right)\sum\limits_{a=0}^{p-2}(-z)^{a}-
\sum\limits_{a=(p+1)/2}^{p-1}\left(\frac{a}{p}\right)z^{2a-p}\\
&=\sum\limits_{\substack{a=2\\\textrm{$a$ even}}}^{p-3}\left(\frac{2a}{p}\right)z^{a}+\left(\frac{2}{p}\right)\sum\limits_{a=0}^{p-2}(-z)^{a}-
\sum\limits_{\substack{a=1\\ \textrm{$a$ odd}}}^{p-2}\left(\frac{2a}{p}\right)z^{a}\\
&=\left(\frac{2}{p}\right)\sum\limits_{a=0}^{p-2}(1+\left(\frac{a}{p}\right))(-z)^{a}.
\end{align*}

As \eqref{Legendre}, for $p\ge 7$, there exists an integer $b$ such that
\begin{equation}
\left(\frac{b^{2}}{p}\right)=1 \quad \textrm{and} \quad 11p/20<b^2<p-1.
\notag
\end{equation}
 So, $\deg y_{3}(z)\ge b^2$, which implies that for $p\ge 7$, we have
\begin{equation}\label{2^mp^n7}
\rho(t,r,q)\ge 2b^2/(p-1)>(2\cdot 11p/20)/(p-1)>1.1.
\end{equation}

For the case $D=p$ and $p=3$, we have
$$
y_{1}(x)\equiv 2x^{2^{m-1}3^{n-1}+g}-x^{g}-2x^{2^{m-1}3^{n-1}}+1 \quad \textrm{(mod $r(x)$)},
$$
where $r(x)=x^{2^{m}3^{n-1}}-x^{2^{m-1}3^{n-1}}+1$.
Let $w_1(x)\equiv x^g$ (mod $r(x)$) with $\deg w_1(x)<\deg r(x)$,
and denote $d=\deg w_1(x)$. Since $g$ is an odd integer, $d$ is also odd.
If $d<2^{m-1}3^{n-1}$, then $d+2^{m-1}3^{n-1}<\deg r(x)$,
so in view of the term $2x^{2^{m-1}3^{n-1}+g}$, we have  $\deg y_1(x)=d+2^{m-1}3^{n-1}$.
If $d>2^{m-1}3^{n-1}$, let $w_2(x)\equiv 2x^{2^{m-1}3^{n-1}+g}$ (mod $r(x)$)
with $\deg w_2(x)<\deg r(x)$, by Lemma \ref{fn} 
the coefficient of $x^d$ in $w_1(x)$ is $\pm 1$,
but the coefficients in $w_2(x)$ are $\pm 2$, so the term $x^d$ does not vanish,
then $\deg y_1(x)\ge d$. Thus, we always have $\deg y_1(x)\ge 2^{m-1}3^{n-1}+1$, which implies that
\begin{equation}
\rho(t,r,q)\ge \frac{2(2^{m-1}3^{n-1}+1)}{2^m3^{n-1}}=1+\frac{1}{2^{m-1}3^{n-1}},
\end{equation}
where the equality can possibly be achieved when $g=1$, that is $t(x)=x+1$.

Now assume that $D=2p$. In this case, we must have $m\ge 3$. Since $\sqrt{2}=-\zeta_{8}^{3}+\zeta_8=-\zeta_{k}^{3\cdot 2^{m-3}p^n}+\zeta_{k}^{2^{m-3}p^n}$, we have
$$
\sqrt{-2p}=\sqrt{2}\cdot \sqrt{-p}=-\sum\limits_{a=1}^{p-1}\left(\frac{a}{p}\right)\zeta_{k}^{2^{m-3}p^{n-1}(8a+3p)}+
\sum\limits_{a=1}^{p-1}\left(\frac{a}{p}\right)\zeta_{k}^{2^{m-3}p^{n-1}(8a+p)}.
$$
Then
$$
y_{1}(x)\equiv (x^g-1)\left(-\sum\limits_{a=1}^{p-1}\left(\frac{a}{p}\right)x^{2^{m-3}p^{n-1}(8a+3p)}+
\sum\limits_{a=1}^{p-1}\left(\frac{a}{p}\right)x^{2^{m-3}p^{n-1}(8a+p)}\right) \quad
\textrm{(mod $r(x)$)}.
$$
Put
$$
y_{2}(x)\equiv \sum\limits_{a=1}^{p-1}\left(\frac{a}{p}\right)x^{2^{m-3}p^{n-1}(8a+3p)}-\sum\limits_{a=1}^{p-1}\left(\frac{a}{p}\right)x^{2^{m-3}p^{n-1}(8a+p)} \quad
\textrm{(mod $r(x)$)}
$$
with $\deg y_{2}(x)< \deg r(x)$. Similarly as before, we have
$$\deg y_{1}(x)\ge \deg y_{2}(x).$$
 Then we only need to consider $y_{2}(x)$. Set $z=x^{2^{m-3}p^{n-1}}$, then $r(x)=\Phi_{8p}(z)$. Define
$$
y_{3}(z)\equiv \sum\limits_{a=1}^{p-1}\left(\frac{a}{p}\right)z^{8a+3p}-\sum\limits_{a=1}^{p-1}\left(\frac{a}{p}\right)z^{8a+p} \,\, \textrm{(mod $\Phi_{8p}(z)$)\, with $\deg y_{3}(z)<\deg \Phi_{8p}(z)$}.
$$
Obviously, we have
$$\deg y_{2}(x)=2^{m-3}p^{n-1}\deg y_{3}(z).$$
 Define
\begin{align*}
&y_{31}(z)\equiv \sum\limits_{a=1}^{p-1}\left(\frac{a}{p}\right)z^{8a+3p} \quad \textrm{(mod $\Phi_{8p}(z)$)\quad with $\deg y_{31}(z)<\deg \Phi_{8p}(z)$},\\
&y_{32}(z)\equiv \sum\limits_{a=1}^{p-1}\left(\frac{a}{p}\right)z^{8a+p} \quad \textrm{(mod $\Phi_{8p}(z)$)\quad with $\deg y_{32}(z)<\deg \Phi_{8p}(z)$}.
\end{align*}
Then $y_{3}(z)=y_{31}(z)-y_{32}(z)$. Since $\Phi_{8p}(z)$ is a polynomial with respect to $z^{4}$ and for any two integers $a$ and $b$, $8a+3p$ is not congruent to $8b+p$ modulo 4, we have
$$
\deg y_{3}(z)=\max\{\deg y_{31}(z),\deg y_{32}(z)\}.
$$

Furthermore we suppose that $p\equiv 3$ (mod 8). Then we can define the following for non-negative integers $\alpha,\beta,u$ and $v$:
$$
\frac{1}{8}p=\alpha+\frac{3}{8},\quad \frac{3}{8}p=\beta+\frac{1}{8},\quad
\frac{5}{8}p=u+\frac{7}{8},\quad \frac{7}{8}p=v+\frac{5}{8}.
$$

Applying Lemma \ref{fn} by setting $m=3$ and $n=1$, we obtain
\begin{align*}
&y_{32}(z)\\
&= \sum\limits_{a=1}^{\beta-1}\left(\frac{a}{p}\right)z^{8a+p}+\left(\frac{\beta}{p}\right)\sum\limits_{a=0}^{p-2}(-1)^{a+1}z^{4a+3}-
\sum\limits_{a=\beta+1}^{v}\left(\frac{a}{p}\right)z^{8a-3p}+\sum\limits_{a=v+1}^{p-1}\left(\frac{a}{p}\right)z^{8a-7p}.
\end{align*}

It is easy to see that in the above summation any integer power of $z$ can appear at most two times.
Let $b$ be the integer $\frac{3p-1}{4}$.
Note that $\beta+1\le b\le v$, $\left(\frac{b}{p}\right)=-1$, and  $8b-3p=3p-2$. So there is one term $-\left(\frac{b}{p}\right)z^{3p-2}=z^{3p-2}$ in the above summation. In addition,
since $\left(\frac{8}{p}\right)\left(\frac{\beta}{p}\right)=-1$, we have $\left(\frac{\beta}{p}\right)=1$. Choose
 $a=\frac{3p-5}{4}$, we have $a \le p-2$ and $4a+3=3p-2$, note that $a$ is odd by the choice of $p$, so there is another one term $\left(\frac{\beta}{p}\right)z^{3p-2}=z^{3p-2}$ in the above summation. Thus, $y_{32}(z)$ has one term $2z^{3p-2}$. So, we have $\deg y_{32}(z)\ge 3p-2$, then $\deg y_{3}(z)\ge 3p-2$,
  which implies that
\begin{equation}\label{2^mp^n38}
\rho(t,r,q)\ge (3p-2)/(2(p-1))> 1.5.
\end{equation}

Finally we suppose that $p\equiv 7$ (mod 8) under the assumption $D=2p$.
As before, we define the following for non-negative integers $\alpha,\beta,u$ and $v$:
$$
\frac{1}{8}p=\alpha+\frac{7}{8},\quad \frac{3}{8}p=\beta+\frac{5}{8},\quad
\frac{5}{8}p=u+\frac{3}{8},\quad \frac{7}{8}p=v+\frac{1}{8}.
$$

Applying Lemma \ref{fn} by setting $m=3$ and $n=1$, we obtain
\begin{align*}
&y_{32}(z)\\
&= \sum\limits_{a=1}^{\beta}\left(\frac{a}{p}\right)z^{8a+p}-\sum\limits_{a=\beta+1}^{v-1}\left(\frac{a}{p}\right)z^{8a-3p}+
\left(\frac{v}{p}\right)\sum\limits_{a=0}^{p-2}(-1)^{a}z^{4a+3}+\sum\limits_{a=v+1}^{p-1}\left(\frac{a}{p}\right)z^{8a-7p}.
\end{align*}

It is easy to see that in the above summation any integer power of $z$ can appear at most two times.
Let $b$ be the integer $\frac{p+1}{4}$. Note that $1\le b\le \beta$, $\left(\frac{b}{p}\right)=1$, and  $8b+p=3p+2$. So there is one term $\left(\frac{b}{p}\right)z^{3p+2}=z^{3p+2}$ in the above summation. In addition,
since $\left(\frac{8}{p}\right)\left(\frac{v}{p}\right)=-1$, we have $\left(\frac{v}{p}\right)=-1$. Choose
 $a=\frac{3p-1}{4}$, we have $a\le p-2$ and $4a+3=3p+2$, note that $a$ is odd by the choice of $p$, so there is another term $-\left(\frac{v}{p}\right)z^{3p+2}=z^{3p+2}$ in the above summation. Thus, $y_{32}(z)$ has one term $2z^{3p+2}$. So, we have $\deg y_{32}(z)\ge 3p+2$, then $\deg y_{3}(z)\ge 3p+2$,
  which implies that
\begin{equation}\label{2^mp^n78}
\rho(t,r,q)\ge (3p+2)/(2(p-1))> 1.5.
\end{equation}

Therefore, we complete the proof of Proposition \ref{2^mp^n3}.
\end{proof}

\begin{proposition}\label{2^mp^n1}
For any odd prime $p$ with $p\equiv 1$ {\rm (mod 4)} and integers, $m\ge 2$ and $n\ge 1$,
Theorem $\ref{Thm1}$ $(\ref{2^mp^n0})$ is true for $k=2^mp^n$.
\end{proposition}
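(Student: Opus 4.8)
The plan is to run the same case split on the value of $D$ as in the proof of Proposition~\ref{2^mp^n3}, the only genuinely new ingredient being the behaviour of the quadratic Gauss sum when $p\equiv 1\pmod 4$. First, by \eqref{Dk} the only square-free $D$ that can occur for $k=2^mp^n$ with $p\equiv 1\pmod 4$ are $D\in\{1,2,p,2p\}$, with $m\ge 3$ forced whenever $D\in\{2,2p\}$. For $D=1$ and for $D=2$ the arguments in the proof of Proposition~\ref{2^mp^n3} carry over verbatim: they use nothing about $p$ modulo $4$, only $\sqrt{-1}=\zeta_k^{2^{m-2}p^n}$ (valid since $4\mid k$) and $\sqrt{-2}=\zeta_k^{2^{m-3}p^n}+\zeta_k^{3\cdot 2^{m-3}p^n}$ (valid since $8\mid k$). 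Hence $D=1$ gives $\rho(t,r,q)\ge\frac{p}{p-1}$ with equality possibly attained for a suitable $g$, and $D=2$ gives $\rho(t,r,q)\ge\frac{3p}{2(p-1)}>1.5$ — note that $p\equiv 1\pmod 4$ forces $p\ge 5$, so the exceptional value $p=3$ never arises here. Thus the substance lies entirely in the cases $D=p$ and $D=2p$.

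For those two cases, the point is that when $p\equiv 1\pmod 4$ the quadratic Gauss sum satisfies $\sum_{a=1}^{p-1}\left(\frac{a}{p}\right)\zeta_p^{a}=\sqrt{p}$ rather than $\sqrt{-p}$, so I would write $\sqrt{-p}=\zeta_k^{2^{m-2}p^n}\sum_{a=1}^{p-1}\left(\frac{a}{p}\right)\zeta_k^{a2^mp^{n-1}}$ when $D=p$ and $\sqrt{-2p}=\bigl(\zeta_k^{2^{m-3}p^n}+\zeta_k^{3\cdot 2^{m-3}p^n}\bigr)\sum_{a=1}^{p-1}\left(\frac{a}{p}\right)\zeta_k^{a2^mp^{n-1}}$ when $D=2p$ (the latter using $m\ge 3$). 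This gives
$$
y_1(x)\equiv (x^g-1)\sum_{a=1}^{p-1}\left(\frac{a}{p}\right)x^{2^{m-2}p^{n-1}(4a+p)}\pmod{r(x)}
$$
when $D=p$, and an analogous expression with exponents $2^{m-3}p^{n-1}(8a+p)$ and $2^{m-3}p^{n-1}(8a+3p)$ when $D=2p$. In each case every exponent occurring in the inner sum is an odd multiple of $2^{m-2}p^{n-1}$ (resp.\ $2^{m-3}p^{n-1}$), whereas $r(x)$ is a polynomial in $x^2$; since reduction modulo $r(x)$ preserves the parity of exponents, the reduced polynomials $-y_2(x)$ and $x^gy_2(x)$ have exponents of opposite parities, no cancellation can occur, and $\deg y_1(x)\ge\deg y_2(x)$ — exactly as in the $D=p$ part of Proposition~\ref{2^mp^n3}. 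Substituting $w=x^{2^{m-2}p^{n-1}}$ when $D=p$ turns $r(x)$ into $\Phi_{4p}(w)$, so that $\deg y_2(x)=2^{m-2}p^{n-1}\deg_w\tilde y_2(w)$ with $\tilde y_2(w)\equiv\sum_{a=1}^{p-1}\left(\frac{a}{p}\right)w^{4a+p}\pmod{\Phi_{4p}(w)}$; substituting $z=x^{2^{m-3}p^{n-1}}$ when $D=2p$ turns $r(x)$ into $\Phi_{8p}(z)$ in the same way. Everything is thereby reduced to bounding below the degree of one explicit polynomial reduced modulo $\Phi_{4p}$ or $\Phi_{8p}$, which I would compute via Lemma~\ref{fn} applied with $(m,n)=(2,1)$ or $(3,1)$.

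For $D=2p$ one splits further according to $p\equiv 1$ or $5\pmod 8$, writes the reduced polynomial as $y_3=y_{31}+y_{32}$ with the two pieces coming from $\zeta_8$ and $\zeta_8^3$, observes that $8a+p$ and $8b+3p$ lie in distinct residue classes modulo $4$ while $\Phi_{8p}(z)=\Phi_{2p}(z^4)$ is a polynomial in $z^4$ — so $\deg y_3=\max\{\deg y_{31},\deg y_{32}\}$ — and then computes $y_{32}$ by Lemma~\ref{fn} and exhibits a surviving term of degree large enough to force $\rho(t,r,q)>1.5$ (analogous to the bound $\deg y_{32}(z)\ge 3p-2$ obtained in the $D=2p$ case of Proposition~\ref{2^mp^n3}, the only change being that the two pieces here both enter with sign $+$). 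For $D=p$ one needs $\deg_w\tilde y_2(w)>1.4(p-1)$ when $p>5$ and $\deg_w\tilde y_2(w)=7$ when $p=5$; the case $p=5$ is a direct computation, $\tilde y_2(w)=2w^7-w^5+2w^3$, whence $\rho(t,r,q)\ge\frac{7}{4}$, and for $p>5$ one expands the reduction of $\sum_{a=1}^{p-1}\left(\frac{a}{p}\right)w^{4a+p}$ by Lemma~\ref{fn} and singles out a sufficiently high power of $w$ whose coefficient is a short Legendre-symbol sum that is forced to be nonzero — the analogue, for $\Phi_{4p}$ and the hypothesis $\left(\frac{-1}{p}\right)=1$, of the argument in Propositions~\ref{p^n} and~\ref{2^mp^n3} using an integer $b$ with $\sqrt{11p/20}<b<\sqrt{p}$.

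The step I expect to be the main obstacle is the degree estimate for $\tilde y_2$ modulo $\Phi_{4p}$ in the case $D=p$ with general $p>5$. When $p\equiv 3\pmod 4$ the leading term of the relevant polynomial can simply be read off, but here the top monomial $w^{5p-4}$ of the unreduced sum collapses — by Lemma~\ref{fn} it reduces to $w^{p-4}$ — so the leading term of $\tilde y_2$ is hidden and several reductions collide in the top coefficients; the delicate part is to choose the right power of $w$ (with exponent exceeding $1.4(p-1)$, and equal to $7$ when $p=5$) and to verify, by a sum-of-Legendre-symbols argument, that its coefficient does not vanish. In contrast, the cases $D=1,2$ are immediate, and $D=2p$ is essentially mechanical once Proposition~\ref{2^mp^n3} is in hand.
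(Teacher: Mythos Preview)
Your plan is essentially the paper's proof: the case split on $D$, the parity trick separating $x^gy_2$ from $-y_2$ to get $\deg y_1\ge\deg y_2$, the substitutions reducing to $\Phi_{4p}$ and $\Phi_{8p}$, the $p=5$ computation, and the $y_{31}/y_{32}$ decomposition for $D=2p$ all match. Two points of divergence are worth flagging.

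For $D=p$ with $p>5$, the paper does \emph{not} use an analogue of the $\sqrt{11p/20}<b<\sqrt{p}$ trick; that device locates a quadratic \emph{residue} in an interval, whereas here one needs a non-residue. Instead the paper first reduces $\tilde y_2$ completely via Lemma~\ref{fn} (with $m=2$, $n=1$) to the closed form
\[
y_3(z)\;=\;\sum_{a=0}^{p-2}(-1)^{a}\Bigl(\Bigl(\tfrac{2a+1}{p}\Bigr)-1\Bigr)\,z^{2a+1},
\]
so the coefficient of $z^{2a+1}$ vanishes exactly when $2a+1$ is a quadratic residue modulo~$p$. The bound $\deg y_3\ge\tfrac{3}{2}(p-1)-1$ then comes from a pigeonhole argument that genuinely exploits $\bigl(\tfrac{-1}{p}\bigr)=1$: if every $a\in[\tfrac{3}{4}(p-1)-1,\,p-2]$ satisfied $\bigl(\tfrac{2a+1}{p}\bigr)=1$, then the $\tfrac{p+3}{4}$ values $2a+1$ together with their negatives $-2a-1$ (also residues, by hypothesis on $p$) would produce $\tfrac{p+3}{2}$ distinct nonzero quadratic residues mod~$p$, exceeding the available $\tfrac{p-1}{2}$. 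So the ``delicate part'' you correctly isolate is handled by counting, not by exhibiting a square in an interval.

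For $D=2p$, the paper analyses $y_{31}$ rather than $y_{32}$: in each subcase $p\equiv1,5\pmod 8$ the reduction of $y_{31}$ via Lemma~\ref{fn} contains a piece $\pm\sum_{a}(-1)^{a}z^{4a+3}$, and the monomial $z^{3p}$ (occurring at $a=\tfrac{3(p-1)}{4}$) survives because none of the remaining exponents $8a+3p,\ 8a-p,\ 8a-5p$ can equal $3p$ for $1\le a\le p-1$. This yields $\deg y_3\ge 3p$ and hence $\rho>1.5$. Working with $y_{32}$ as you propose may also succeed, but the paper's choice of $y_{31}$ makes the surviving term immediate.
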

\begin{proof}
By \eqref{Dk}, if $m=2$, then $D=1$ or $D=p$; otherwise if $m\ge 3$, then $D=1,2,p$, or $2p$.

For the cases $D=1,2$, we can apply the same argument as the proof of
 Proposition \ref{2^mp^n3} to verify the desired result.

Now suppose that $D=p$. Since $p\equiv 1$ (mod 4), we have $\sqrt{p}=\sum\limits_{a=1}^{p-1}\left(\frac{a}{p}\right)\zeta_{p}^a=\sum\limits_{a=1}^{p-1}\left(\frac{a}{p}\right)\zeta_{k}^{a2^{m}p^{n-1}}$, then
$$
\sqrt{-p}=\sqrt{-1}\cdot\sqrt{p}=\sum\limits_{a=1}^{p-1}\left(\frac{a}{p}\right)\zeta_{k}^{2^{m-2}p^{n-1}(4a+p)}.
$$
Then
$$
y_{1}(x)\equiv \sum\limits_{a=1}^{p-1}\left(\frac{a}{p}\right)x^{2^{m-2}p^{n-1}(4a+p)+g}-\sum\limits_{a=1}^{p-1}\left(\frac{a}{p}\right)x^{2^{m-2}p^{n-1}(4a+p)} \quad
\textrm{(mod $r(x)$)}.
$$
Put
$$
y_{2}(x)\equiv \sum\limits_{a=1}^{p-1}\left(\frac{a}{p}\right)x^{2^{m-2}p^{n-1}(4a+p)} \quad \textrm{(mod $r(x)$)\quad with $\deg y_{2}(x)<\deg r(x)$}.
$$
Notice that $r(x)$ is also a polynomial with respect to $x^2$, and that every $2^{m-2}p^{n-1}(4a+p)+g$ and $2^{m-2}p^{n-1}(4b+p)$ have different parities for any two integers $a,b$.
So as before, we have
$$\deg y_{1}(x)\ge \deg y_{2}(x).$$
Then, we only need to consider $y_{2}(x)$. Put $z=x^{2^{m-2}p^{n-1}}$, then $r(x)=\Phi_{4p}(z)$.
Define
$$
y_{3}(z)\equiv \sum\limits_{a=1}^{p-1}\left(\frac{a}{p}\right)z^{4a+p} \quad \textrm{(mod $\Phi_{4p}(z)$)\quad with $\deg y_{3}(z)<\deg \Phi_{4p}(z)$}.
$$
Clearly, we have
 $$\deg y_{2}(x)=2^{m-2}p^{n-1}\deg y_{3}(z).$$

Assume that $p=4b+1$. Applying Lemma \ref{fn} by setting $m=2$ and $n=1$, and noticing that $\left(\frac{a}{p}\right)=\left(\frac{4a}{p}\right)$ for any integer $a$, we have
\begin{align*}
y_{3}(z)
&= \sum\limits_{a=1}^{b-1}\left(\frac{a}{p}\right)z^{4a+p}+\sum\limits_{a=0}^{p-2}(-1)^{a+1}z^{2a+1}-
\sum\limits_{a=b+1}^{3b}\left(\frac{a}{p}\right)z^{4a-p}+\sum\limits_{a=3b+1}^{p-1}\left(\frac{a}{p}\right)z^{4a-3p}\\
&= \sum\limits_{a=1}^{b-1}\left(\frac{a}{p}\right)z^{2(2a+2b)+1}+\sum\limits_{a=0}^{p-2}(-1)^{a+1}z^{2a+1}-
\sum\limits_{a=b+1}^{3b}\left(\frac{a}{p}\right)z^{2(2a-2b-1)+1}\\
   &\quad +\sum\limits_{a=3b+1}^{p-1}\left(\frac{a}{p}\right)z^{2(2a-6b-2)+1}\\
&= \sum\limits_{\substack{a=2(b+1)\\ \textrm{$a$ even}}}^{p-3}\left(\frac{2a+1}{p}\right)z^{2a+1}+\sum\limits_{a=0}^{p-2}(-1)^{a+1}z^{2a+1}-
\sum\limits_{\substack{a=1\\ \textrm{$a$ odd}}}^{p-2}\left(\frac{2a+1}{p}\right)z^{2a+1}\\
   &\quad +\sum\limits_{\substack{a=0\\ \textrm{$a$ even}}}^{2(b-1)}\left(\frac{2a+1}{p}\right)z^{2a+1},\\
&=\sum\limits_{a=0}^{p-2}(-1)^{a}(\left(\frac{2a+1}{p}\right)-1)z^{2a+1}.
\end{align*}

We first let $p>5$. By the choice of $p$, we have $p\ge 13$. Note that since $p\equiv 1$ {\rm (mod 4)}, we have $\left(\frac{-a}{p}\right)=\left(\frac{a}{p}\right)$ for any integer $a$. Now we claim that there exists an integer $\frac{3}{4}(p-1)-1\le a\le p-2$ such that $\left(\frac{2a+1}{p}\right)=-1$. Indeed, assume that for any integer $\frac{3}{4}(p-1)-1\le a\le p-2$, we have  $\left(\frac{2a+1}{p}\right)=1$, and thus $\left(\frac{-2a-1}{p}\right)=1$. Using proof by contradiction, we can find that $2a+1$ is not congruent to $-2c-1$ modulo $p$ for any $\frac{3}{4}(p-1)-1\le a,c\le p-2$. Then, the set $\{2a+1,-2a-1:\frac{3}{4}(p-1)-1\le a\le p-2\}$ is contained in some complete set $S$ of representatives for $\{a: 1\le a\le p-1\}$ (mod $p$) and its cardinality is $\frac{p+3}{2}$. So, there exist $\frac{p+3}{2}$ elements $a\in S$ with $\left(\frac{a}{p}\right)=1$. However, we know that there are exactly $\frac{p-1}{2}$ elements $a\in S$ with $\left(\frac{a}{p}\right)=1$. So the claim is proved. Thus, $y_{3}(z)$ has one term $(-1)^{a+1}2z^{2a+1}$
with $\frac{3}{4}(p-1)-1\le a\le p-2$. So, we have $\deg y_{3}(z)\ge \frac{3}{2}(p-1)-1$,
which implies that
\begin{equation}\label{2^mp^np}
\rho(t,r,q)\ge (\frac{3}{2}(p-1)-1)/(p-1)>1.4.
\end{equation}
Now we let $p=5$. Then we directly obtain $y_{3}(z)=2z^7-z^5+2z^3$, which implies that
\begin{equation}\label{2^mp^n50}
\rho(t,r,q)\ge 7/4=1.75.
\end{equation}

Now assume that $D=2p$. Since $\sqrt{-2}=\zeta_{8}^{3}+\zeta_8=\zeta_{k}^{3\cdot 2^{m-3}p^n}+\zeta_{k}^{2^{m-3}p^n}$, we have
$$
\sqrt{-D}=\sqrt{-2}\cdot \sqrt{p}=\sum\limits_{a=1}^{p-1}\left(\frac{a}{p}\right)\zeta_{k}^{2^{m-3}p^{n-1}(8a+3p)}+
\sum\limits_{a=1}^{p-1}\left(\frac{a}{p}\right)\zeta_{k}^{2^{m-3}p^{n-1}(8a+p)}.
$$
Then
$$
y_{1}(x)\equiv (x^g-1)\left[\sum\limits_{a=1}^{p-1}\left(\frac{a}{p}\right)x^{2^{m-3}p^{n-1}(8a+3p)}+\sum\limits_{a=1}^{p-1}\left(\frac{a}{p}\right)x^{2^{m-3}p^{n-1}(8a+p)}\right] \quad
\textrm{(mod $r(x)$)}.
$$
Put
$$
y_{2}(x)\equiv \sum\limits_{a=1}^{p-1}\left(\frac{a}{p}\right)x^{2^{m-3}p^{n-1}(8a+3p)}+\sum\limits_{a=1}^{p-1}\left(\frac{a}{p}\right)x^{2^{m-3}p^{n-1}(8a+p)} \quad
\textrm{(mod $r(x)$)}
$$
with $\deg y_{2}(x)< \deg r(x)$.
Similarly as before, we have
$$\deg y_{1}(x)\ge \deg y_{2}(x).$$
 Then we only need to consider $y_{2}(x)$.
 Set $z=x^{2^{m-3}p^{n-1}}$, then $r(x)=\Phi_{8p}(z)$. Define
$$
y_{3}(z)\equiv \sum\limits_{a=1}^{p-1}\left(\frac{a}{p}\right)z^{8a+3p}+\sum\limits_{a=1}^{p-1}\left(\frac{a}{p}\right)z^{8a+p} \quad \textrm{(mod $\Phi_{8p}(z)$)\quad with $\deg y_{3}(z)<\deg \Phi_{8p}(z)$}.
$$
Obviously, we have
$$\deg y_{2}(x)=2^{m-3}p^{n-1}\deg y_{3}(z).$$
 Moreover, define
\begin{align*}
&y_{31}(z)\equiv \sum\limits_{a=1}^{p-1}\left(\frac{a}{p}\right)z^{8a+3p} \quad \textrm{(mod $\Phi_{8p}(z)$)\quad with $\deg y_{31}(z)<\deg \Phi_{8p}(z)$},\\
&y_{32}(z)\equiv \sum\limits_{a=1}^{p-1}\left(\frac{a}{p}\right)z^{8a+p} \quad \textrm{(mod $\Phi_{8p}(z)$)\quad with $\deg y_{32}(z)<\deg \Phi_{8p}(z)$}.
\end{align*}
Then $y_{3}(z)=y_{31}(z)+y_{32}(z)$. Since $\Phi_{8p}(z)$ is also a polynomial with respect to $z^{4}$, and $8a+3p$ is not congruent to $8b+p$ modulo 4 for any two integers $a$ and $b$, we have
$$
\deg y_{3}(z)=\max\{\deg y_{31}(z),\deg y_{32}(z)\}.
$$

Furthermore we suppose that $p\equiv 1$ (mod 8). Then we can define the following for non-negative integers $\alpha,\beta,u$ and $v$,
$$
\frac{1}{8}p=\alpha+\frac{1}{8},\quad \frac{3}{8}p=\beta+\frac{3}{8},\quad
\frac{5}{8}p=u+\frac{5}{8},\quad \frac{7}{8}p=v+\frac{7}{8}.
$$

Applying Lemma \ref{fn} by setting $m=3$ and $n=1$, and
noticing that $\left(\frac{\alpha}{p}\right)=1$, we obtain
\begin{align*}
&y_{31}(z)\\
&= \sum\limits_{a=1}^{\alpha-1}\left(\frac{a}{p}\right)z^{8a+3p}+\sum\limits_{a=0}^{p-2}(-1)^{a+1}z^{4a+3}-
\sum\limits_{a=\alpha+1}^{u}\left(\frac{a}{p}\right)z^{8a-p}+\sum\limits_{a=u+1}^{p-1}\left(\frac{a}{p}\right)z^{8a-5p}.
\end{align*}

 By the choice of $p$, we find that $c=\frac{3}{4}(p-1)$ is an even integer, $c\le p-2$,
 and $4c+3=3p$. So, there is one term $(-1)^{c+1}z^{4c+3}=-z^{3p}$ in the above summation.
 Notice that the three integers $8a+3p, 8a-p, 8a-5p$ are not equal to $3p$ for any $1\le a\le p-1$.
 Thus, $y_{31}(z)$ has one term $-z^{3p}$.
 So, we have $\deg y_{31}(z)\ge 3p$. Then $\deg y_{3}(z)\ge 3p$, which implies that
\begin{equation}\label{2^mp^n81}
\rho(t,r,q)\ge 3p/(2(p-1))>1.5.
\end{equation}

Finally we suppose that $p\equiv 5$ (mod 8) under the assumption $D=2p$. As before we define the following for non-negative integers $\alpha,\beta,u$ and $v$:
$$
\frac{1}{8}p=\alpha+\frac{5}{8},\quad \frac{3}{8}p=\beta+\frac{7}{8},\quad
\frac{5}{8}p=u+\frac{1}{8},\quad \frac{7}{8}p=v+\frac{3}{8}.
$$

Applying Lemma \ref{fn} by setting $m=3$ and $n=1$, 
and noticing that $\left(\frac{u}{p}\right)=-1$, we obtain
\begin{align*}
&y_{31}(z)\\
&= \sum\limits_{a=1}^{\alpha}\left(\frac{a}{p}\right)z^{8a+3p}-
\sum\limits_{a=\alpha+1}^{u-1}\left(\frac{a}{p}\right)z^{8a-p}-
\sum\limits_{a=0}^{p-2}(-1)^{a}z^{4a+3}+\sum\limits_{a=u+1}^{p-1}\left(\frac{a}{p}\right)z^{8a-5p}.
\end{align*}

 By the choice of $p$, we find that $c=\frac{3}{4}(p-1)$ is an odd integer, $c\le p-2$,
 and $4c+3=3p$. So, there is one term $(-1)^{c+1}z^{4c+3}=z^{3p}$ in the above summation.
 Notice that the three integers $8a+3p, 8a-p, 8a-5p$ are not equal to $3p$ for any $1\le a\le p-1$.
 Thus, $y_{31}(z)$ has one term $z^{3p}$.
 So we have $\deg y_{31}(z)\ge 3p$. Then $\deg y_{3}(z)\ge 3p$, which implies that
\begin{equation}\label{2^mp^n81}
\rho(t,r,q)\ge 3p/(2(p-1))>1.5.
\end{equation}

This completes the proof of Proposition \ref{2^mp^n1}.
\end{proof}

\section{Numerical data}\label{Proof2}

In this section, we use PARI/GP to test the cyclotomic families in Theorem \ref{Thm1} with
embedding degree $k\le 82$.
Without presenting the source codes, we want to explain briefly how we can achieve this.

Here, we use the notation at the beginning of Section \ref{Proof1}.
To construct the cyclotomic families with embedding degree $k$ and CM discriminant $D$
considered in Theorem \ref{Thm1}, first calculate $r(x)=\Phi_k(x)$, and then compute
\begin{equation*}
t(x)\equiv x^g+1 \quad \textrm{(mod $r(x)$)}
\end{equation*}
 for $1\le g\le k$ and $\gcd(g,k)=1$.
In our cases, if $g$ is given, then $s(x)$ has an explicit formula, see Section \ref{Proof1}.
But here we use a uniform way to obtain $s(x)$, that is to find a root of $X^2+D$ in the
field $\Q[x]/(r(x))$, which can be done in PARI/GP, see \cite{Kang2007} for more details.
So, for any given $g$, following Theorem \ref{Brezing-Weng},
we can construct polynomials $t(x), y(x), q(x)$ explicitly.
Note that $t(x)$ automatically represents integers.
To test whether $(t(x),r(x),q(x))$ is a cyclotomic
family, we need to test whether $q(x)$ represents primes and whether $y(x)$ represents integers.
Letting $g$ run through all the possibilities, we obtain all the corresponding cyclotomic families,
and then we can easily get the smallest $\rho$-value and determine
whether the smallest possible $\rho$-value indicated in Theorem \ref{Thm1} can be achieved.

Now, we explain how to test whether a given polynomial $f(x)\in \Q[x]$ represents integers or
represents primes, actually it is easy and it has also been discussed below Definition 2.5 of \cite{Freeman2010}.

For a given polynomial $f(x)\in\Q[x]$, let $N$ be the least common multiple of the
denominators of its coefficients. To test whether it represents integers,
we only need to calculate $f(n)$ for $0\le n<N$.

 Now, to test whether $f(x)$ represents primes, by Definition \ref{reprime}, we only need to explain how to test
 $\gcd(\{f(x)\in\Z: x\in \Z\})=1$.
 Assume that $f(x)$ represents integers,
 compute $M=\gcd(\{f(n)\in\Z: 0\le n<N\})$, and 
suppose that $M\ne 1$. Then for every prime factor $p$ of $M$,
to determine whether $p|\gcd(\{f(x)\in\Z: x\in\Z\})$,
we only need to test whether $f(n)$ is divisible by $p$ for all $0\le n<pN$
when $f(n)\in\Z$. If $p\nmid\gcd(\{f(x)\in\Z: x\in\Z\})$ for every prime factor $p$ of $M$,
then we have $\gcd(\{f(x)\in\Z: x\in\Z\})=1$.

Using the above discussions, we can write some programs in PARI/GP to obtain
Table \ref{data}. Here, we omit the details.

For each $k\le 82$ with the form $k=2^mp^n$, Table \ref{data} gives the parameters $(\rho_k,D,g,\deg t(x))$ of a cyclotomic family in Theorem \ref{Thm1} with smallest $\rho$-value $\rho_k$. For example, given $k$ and $(\rho_k,D,g, \deg t(x))$, one can use the above discussions to construct a cyclotomic family $(t(x),r(x),q(x))$ with embedding degree $k$, CM discriminant $D$ and $\rho$-value $\rho_k$. Notice that for a given $k$, there may exist several cyclotomic families with smallest $\rho$-value $\rho_k$, here among them we choose a family such that $t(x)$ has smallest degree. We also want to indicate that for some $k$ there are no such cyclotomic families with embedding degree $k$.

\begin{table}
\centering
\caption{Cyclotomic families in Theorem \ref{Thm1} with $k\le 82$}
\label{data}
\begin{tabular}{|c|c||c|c|}
\hline
$k$ & $(\rho_k,D,g,\deg t(x))$ & $k$ & $(\rho_k,D,g,\deg t(x))$   \\ \hline

$1$ & No such cyclotomic families. & $37$  & No such cyclotomic families.   \\ \hline

$2$ & No such cyclotomic families. &  $38$ & $(1.889,19,3,3)$   \\ \hline

$3$ & No such cyclotomic families. &   $40$ &  $(1.750,5,11,11)$  \\ \hline

$4$ & No such cyclotomic families. &   $41$ &  No such cyclotomic families.   \\ \hline

$5$ & No such cyclotomic families. &  $43$ & $(1.890,43,5,5)$    \\ \hline

$6$ & No such cyclotomic families. &  $44$ & $(1.200,1,1,1)$   \\ \hline

$7$ & $(1.667,7,1,1)$ &  $46$  & $(1.727,23,1,1)$  \\ \hline

$8$ & No such cyclotomic families. &  $47$ & $(1.783,47,5,5)$   \\ \hline

$9$ & $(1.333,3,1,1)$ & $48$ &  $(1.125,3,1,1)$   \\ \hline

$10$ & No such cyclotomic families. &  $49$ & $(1.381,7,1,1)$  \\ \hline

$11$ & $(1.600,11,4,4)$ & $50$ & No such cyclotomic families.  \\ \hline

$12$ & $(1.500,3,1,1)$ &  $52$ & $(1.167,1,1,1)$  \\ \hline

$13$ & No such cyclotomic families. &  $53$ & No such cyclotomic families.  \\ \hline

$14$ & $(1.667,7,1,1)$ &  $54$ & No such cyclotomic families. \\ \hline

$16$ & No such cyclotomic families. & $56$ & $(1.417,7,1,1)$ \\ \hline

$17$ & No such cyclotomic families. &  $58$ & No such cyclotomic families. \\ \hline

$18$ & No such cyclotomic families. &  $59$ & $(1.828,59,16,16)$  \\ \hline

$19$ & $(1.667,19,10,10)$ & $61$ &  No such cyclotomic families. \\ \hline

$20$ & $(1.500,1,1,1)$ &  $62$ & $(1.867,31,7,7)$ \\ \hline

$22$ & $(1.800,11,3,3)$ &  $64$ & $(1.563,2,1,1)$ \\ \hline

$23$ & $(1.727,23,1,1)$ &  $67$ & $(1.848,67,25,25)$ \\ \hline

$24$ & $(1.250,3,1,1)$ &  $68$ & $(1.125,1,1,1)$\\ \hline

$25$ & No such cyclotomic families. &  $71$ & $(1.886,71,2,2)$  \\ \hline

$26$ & No such cyclotomic families. & $72$ & $(1.333,2,1,1)$ \\ \hline

$27$ & $(1.111,3,1,1)$  &   $73$ & No such cyclotomic families.  \\ \hline

$28$ & $(1.333,1,1,1)$ &   $74$ & No such cyclotomic families. \\ \hline

$29$ & No such cyclotomic families. &   $76$ & $(1.111,1,1,1)$  \\ \hline

$31$& $(1.667,31,3,3)$ &   $79$ & $(1.821,79,21,21)$  \\ \hline

$32$& $(1.625,2,1,1)$ &   $80$ & $(1.750,5,21,21)$  \\ \hline

$34$ & No such cyclotomic families. &   $81$ & $(1.037,3,1,1)$  \\ \hline

$36$ & $(1.667,1,1,1)$ &   $82$ & No such cyclotomic families.  \\ \hline

\end{tabular}
\end{table}

\section*{Acknowledgement}
The author would like to thank Keiji Okano for sending us his recent work \cite{Okano2012}.
 He also thanks the referee for careful reading and useful comments.


\begin{thebibliography}{99}

\bibitem{Koblitz1998}
R. Balasubramanian and N. Koblitz, \emph{The improbability that an elliptic curve has subexponential discrete
log problem under the Menezes-Okamoto-Vanstone algorithm}, J. Cryptology \textbf{11} (1998), 141-145.

\bibitem{Barreto2002}
P.S.L.M. Barreto, B. Lynn and M. Scott, \emph{Constructing elliptic curves with prescribed embedding degrees},
in Security in Communication Networks$-$SCN 2002, Lecture Notes in Comput. Sci. \textbf{2576} (2002), 263-273.

\bibitem{Barreto2006}
P.S.L.M. Barreto and M. Naehrig, \emph{Pairing-friendly elliptic curves of prime order}, in Selected Areas in Cryptography 2005, Lecture Notes in Comput. Sci. \textbf{3897} (2006), 319-331.

\bibitem{Bateman1962}
P.T. Bateman and R.A. Horn, \emph{A heuristic asymptotic formula concerning the distribution of prime numbers}, Math. Comp. \textbf{16} (1962), 363-367.

\bibitem{Boneh2003}
D. Boneh and M. Franklin, \emph{Identity-based encryption from the Weil pairing}, in Crypto 2001, Lecture Notes in Comput. Sci. \textbf{2139} (2001), 213-229. Full version: SIAM J. Comput. \textbf{32} (2003), 586-615.

\bibitem{Boneh2004}
D. Boneh, B. Lynn and H. Shacham, \emph{Short signatures from the Weil pairing}, in Asiacrypt 2001, Lecture Notes in Comput. Sci. \textbf{2248} (2001), 514-532. Full version: J. Cryptology \textbf{17} (2004), 297-319.

\bibitem{Shafarevich1966}
A.I. Borevich and I.R. Shafarevich, \emph{Number Theory}, Academic Press, 1966.

\bibitem{Brezing2005}
F. Brezing and A. Weng, \emph{Elliptic curves suitable for pairing based cryptography}, Des. Codes Cryptogr. \textbf{37} (2005), 133-141.

\bibitem{Cohen2005}
H. Cohen, G. Frey, R. Avanzi, C. Doche, T. Lange, K. Nguyen and F. Vercauteren, \emph{Handbook of elliptic
and hyperelliptic curve cryptography}, Chapman \& Hall/CRC, Boca Raton, 2005.

\bibitem{Freeman2010}
D. Freeman, M. Scott and E. Teske, \emph{A taxonomy of pairing-friendly elliptic curves}, J. Cryptology \textbf{23} (2010), 224-280.

\bibitem{Joux2000}
A. Joux, \emph{A one round protocol for tripartite Diffie-Hellman}, in Algorithmic Number Theory
Symposium 2000, Lecture Notes in Comput. Sci. \textbf{1838} (2000), 385-393.

\bibitem{Kang2007}
W.S. Kang, \emph{Construction of Pairing-Friendly Elliptic Curves}, IACR Cryptology ePrint Archive 2007/110. Available at : \url{http://eprint.iacr.org/2007/110}.

\bibitem{Lang}
S. Lang, \emph{Algebra}, 3rd ed., Springer, 2002.

\bibitem{Miyaji2001}
A. Miyaji, M. Nakabayashi and S. Takano, \emph{New explicit conditions of elliptic curve traces for FR-reduction},
IEICE Trans. Fundam. \textbf{E84-A} (2001), 1234-1243.

\bibitem{Okano2012}
K. Okano,  \emph{On the $\rho$-values of complete families of pairing-friendly elliptic curves},
J. Math. Cryptology \textbf{6} (2012), 249-268.

\bibitem{Pari}
PARI/GP, version {\tt 2.5.5}, Bordeaux, 2013, \url{http://pari.math.u-bordeaux.fr/}.

\bibitem{Paterson2005}
K. Paterson, \emph{Cryptography from pairings}, Ch. X of I. Blake, G. Seroussi and N. Smart (Eds.), Advances in Elliptic Curve Cryptography, Cambridge University Press, 2005.

\bibitem{Sakai2000}
R. Sakai, K. Ohgishi and M. Kasahara, \emph{Cryptosystems based on pairing}, in Symposium on Cryptography
and Information Security 2000, Okinawa, Japan, 2000.

\bibitem{Scott2006}
M. Scott and P.S.L.M. Barreto, \emph{Generating more MNT elliptic curves}, Des. Codes Cryptogr. \textbf{38} (2006), 209-217.

\bibitem{Shparlinski2012}
J.J. Urroz, F. Luca and I.E. Shparlinski, \emph{On the number of isogeny classes and pairing-friendly elliptic
curves and statistics for MNT curves}, Math. Comp. \textbf{81} (2012), 1093-1110.


\end{thebibliography}
\end{document}